\newcommand{\matr}[1]{\mathrm{#1}}  
\newcounter{vecdepth}
\newcommand{\vect}[1]{          
  \ifnum\thevecdepth=0%
    \addtocounter{vecdepth}{1}%
    \mathbf{#1}%
    \addtocounter{vecdepth}{-1}%
  \else%
    \underline{#1}%
  \fi%
}
\newcommand{\restrict}[2]{      
   {\leavevmode\lower-.2em\hbox{${{#1}_{/}}_{\! {#2}}$}}
}
\newcommand{\isdef}{            
   \setlength\unitlength{1em}
   =\put(-.975,.48){{\tiny $\triangle$}}
}
\newcommand{\auth}[1]{{\sc #1}}     
\newtheorem{remarkk}{Remark} 
\newtheorem{notee}{Note}
\newtheorem{openn}{Problem}
\newtheorem{definitionn}{Definition}
\newtheorem{examplee}{Example}
\def\probitem[#1]#2\par{\item[{\sf #1} \hspace{0ex}] #2\par}    
\newenvironment{naritemize}{%
  \begin{itemize}%
    \vspace{-\topsep}%
    \vspace{-\parskip}%
    \setlength{\itemsep}{0pt}%
    \setlength{\parskip}{0pt}%
}{%
  \end{itemize}%
}
\newcounter{seqenum}
\def\lab#1{{\txt{\footnotesize $#1$}}}  
\def\node{*=0{\bullet}}         
\def\edge#1[#2]{\ar@/^#1/@{-}[#2]}  
\def\dedge#1[#2]{           
  \ar@/^#1/@{-}[#2]
}
\def\curledgeee[#1]{\ar@{.}[#1]}      
\def\curledge[#1]{\ar@{~}[#1]}      
\def\inedge<#1,#2>{         
   \save[]+<#1,#2>\edge{0cm}[]\restore
}
\def\ftxt<#1,#2>#3{         
  \save[]+<#1,#2>*\txt{#3}\restore
}
\def\emline#1#2#3#4#5#6{%
\put(#1,#2){\special{em:moveto}}%
\put(#4,#5){\special{em:lineto}}}
\def\diagEta{
  \xymatrix@R=.08cm@C=2.2cm{
    & \node
         \inedge<-.35cm,.3cm>\inedge<-.15cm,.3cm>\inedge<.35cm,.3cm>
         \ftxt<0cm,.5cm>{$\overbrace{\qquad}$}
         \ftxt<0cm,.8cm>{\footnotesize $R_{_{\{\widetilde{k},\widetilde{0}\}}}$}
         \ftxt<1.1em,-.02ex>{$ u_{_{0}}$} & \\
     \\    \\    \\
      & \ftxt<0cm,0cm>{$\vdots$} &
     \\    \\    \\    \\
 \ftxt<2.5cm,.5cm>{\footnotesize $R_{_{\{\widetilde{k},\widetilde{k-1}\}}}$}
    & \node
         \inedge<-.35cm,.3cm>\inedge<-.15cm,.3cm>\inedge<.35cm,.3cm>
         \ftxt<2em,.4ex>{$u_{_{{k-1}}}$} & \\
    \node\ftxt<-1.7ex,0ex>{$x$}
       \edge{0cm}[uuuuuuuuur]\edge{0cm}[ur] \edge{0cm}[dr]\edge{0cm}[dddddddddr]
      & & \node\ftxt<1.7ex,0ex>{$y$}
      \edge{0cm}[uuuuuuuuul]\edge{0cm}[ul]\edge{0cm}[dl]\edge{0cm}[dddddddddl] \\
            & \node
         \inedge<-.35cm,-.3cm>\inedge<.15cm,-.3cm>\inedge<.35cm,-.3cm>
         \ftxt<0cm,-.6cm>{\footnotesize $R_{_{\{\widetilde{k},\widetilde{k+1}\}}}$}
         \ftxt<2em,-.9ex>{$u_{_{{k+1}}}$} &
    \\   \\    \\    \\
     & \ftxt<0cm,0cm>{$\vdots$} &
    \\   \\    \\    \\
    & \node
         \inedge<-.35cm,-.3cm>\inedge<.15cm,-.3cm>\inedge<.35cm,-.3cm>
         \ftxt<0cm,-.5cm>{$\underbrace{\qquad}$}
         \ftxt<0cm,-.8cm>{\footnotesize $R_{_{\{\widetilde{k},\widetilde{n-1}\}}}$}
         \ftxt<1.6em,-.4ex>{$u_{_{{n-1}}}$} & \\
  }
}
\def\diagzeta{
  \xymatrix@R=.15cm@C=.8cm{
    & \node
         \inedge<-.35cm,.3cm>\inedge<-.15cm,.3cm>\inedge<.35cm,.3cm>
         \ftxt<0cm,.5cm>{$\overbrace{\qquad}$}
         \ftxt<0cm,.8cm>{\footnotesize $R_{_{\{\widetilde{k},\widetilde{i}\}}}$} & \\\ftxt<6.4ex,.4ex>{$v$}
         \\
          \node\ftxt<-1.7ex,0ex>{$a$} \edge{0cm}[uur]\edge{0cm}[ddr] &
      & \node\ftxt<1.7ex,0ex>{$b$} \edge{0cm}[uul]\edge{0cm}[ddl] \\\ftxt<6.4ex,-.5ex>{$w$}
      \\
      & \node
         \inedge<-.35cm,-.3cm>\inedge<.15cm,-.3cm>\inedge<.35cm,-.3cm>
         \ftxt<0cm,-.5cm>{$\underbrace{\qquad}$}
         \ftxt<0cm,-.8cm>{\footnotesize $R_{_{\{\widetilde{k},\widetilde{j}\}}}$} & \\
  }
}
\def\diagtree{
  \xymatrix@R=.2cm@C=.5cm{
     & & \node & & \node & \\
     & & & \node\curledgeee[ul]\curledgeee[ur] & & \\
     & & & & & \node \\
     & \node\curledgeee[r] & \node\curledgeee[r]
       & \node\curledgeee[r]\curledgeee[uu] & \node\curledgeee[ur]\curledgeee[dr] & \\
     & & & & & \node \\
     & & & \node\curledgeee[uu] & & \\
     & & \node\curledgeee[ur]\ftxt<1.5ex,-1.5ex>{$u_{_{j}}$} & & & \\
     \node\curledgeee[r]
       & \node\curledgeee[ur]^{\lab{\zeta(i,j)}}\ftxt<1.7ex,-1.2ex>{$u_{_{i}}$}
       & & \node\curledgeee[uu] & & \\
     & & & & \node\curledgeee[ul] & \\
     & \node\curledgeee[uu] & & \node\curledgeee[uu] & & \\
  }
}
\def\diagfanar{
  \xymatrix@R=.5cm@C=.5cm{
 \node \curledge[rrrr] & & & & \node
\ftxt<-2.65cm,0.3cm>{\footnotesize$x$}
\ftxt<-1.35cm,0.35cm>{\footnotesize$\matr{L}_{_{k,n}}$}
\ftxt<0cm,0.3cm>{\footnotesize$y$}
  }
}
\def\diagT{
  \xymatrix@R=1cm@C=2cm{
    & \node
         \inedge<-.35cm,.3cm>\inedge<-.15cm,.3cm>\inedge<.35cm,.3cm>
         \ftxt<0cm,.5cm>{$\overbrace{\qquad}$}
         \ftxt<0cm,.8cm>{\footnotesize $R_{_{\{\widetilde{i},\widetilde{j}\}}}$} & \\  & \\
    \node\ftxt<-1.7ex,0ex>{$x$}
    \ftxt<14.8ex,14.3ex>{$u$}
    \ftxt<15.1ex,-14.5ex>{$v$}
     \curledge[uur]^{\lab{\matr{\matr{L}}_{_{i,n}}}}\curledge[ddr]_{\lab{\matr{L}_{_{j,n}}}}
      \ftxt<2.2cm,.3cm>{\footnotesize$\matr{L}_{_{k,n}}$}
       \ftxt<2.2cm,-.3cm>{\footnotesize$\forall{k} \in C_{_{\{i,j\}}}$}
      \curledge[rr]
            & & \node\ftxt<1.7ex,0ex>{$y$}\edge{0cm}[uul]\edge{0cm}[ddl]
      \\ \\
    & \node
         \inedge<-.35cm,-.3cm>\inedge<.15cm,-.3cm>\inedge<.35cm,-.3cm>
         \ftxt<0cm,-.5cm>{$\underbrace{\qquad}$}
         \ftxt<0cm,-.8cm>{\footnotesize $R_{_{\{\widetilde{i},\widetilde{j}\}}}$} & \\
  }
}
\def\diagCs{
  \xymatrix@R=.7cm@C=1cm{
    & \node
         \inedge<-.35cm,.3cm>\inedge<-.15cm,.3cm>\inedge<.35cm,.3cm>
         \ftxt<0cm,.5cm>{$\overbrace{\qquad}$}
         \ftxt<0cm,.8cm>{\footnotesize $R_{_{\{\widetilde{i},\widetilde{j}\}}}$} & \\
             \node\ftxt<-1.7ex,0ex>{$x$}
    \ftxt<14.8ex,14.3ex>{$u$}
    \ftxt<15.1ex,-14.5ex>{$v$}
     \curledge[ur]^{\lab{\matr{\matr{L}}_{_i}}}\curledge[dr]_{\lab{\matr{L}_{_j}}}
      \ftxt<2.2cm,.3cm>{\footnotesize$\matr{L}_{_{k}}$}
       \ftxt<2.2cm,-.3cm>{\footnotesize$\forall{k} \in S_{_{\{i,j\}}}$}
      \curledge[r]
            & & \node\ftxt<1.7ex,0ex>{$y$}\edge{0cm}[ul]\edge{0cm}[dl]
      \\
    & \node
         \inedge<-.35cm,-.3cm>\inedge<.15cm,-.3cm>\inedge<.35cm,-.3cm>
         \ftxt<0cm,-.5cm>{$\underbrace{\qquad}$}
         \ftxt<0cm,-.8cm>{\footnotesize $R_{_{\{\widetilde{i},\widetilde{j}\}}}$} & \\
  }
}
\def\diagchar{
  \xymatrix@R=.2cm@C=.5cm{
& & & &
\node\inedge<-.35cm,.3cm>\inedge<-.15cm,.3cm>\inedge<.35cm,.3cm>
                \ftxt<0cm,.4cm>{$\overbrace{\qquad}$}
                \ftxt<0.1cm,.65cm>{\footnotesize $R_{_{\{\widetilde{0},\widetilde{1}\}}}$}
                \ftxt<.11cm,-.2cm>{\footnotesize $w_{_{0}}$} \edge{0cm}[rrrrrl]
 & & & &
\node\inedge<-.35cm,.3cm>\inedge<-.15cm,.3cm>\inedge<.35cm,.3cm>
                \ftxt<0cm,.4cm>{$\overbrace{\qquad}$}
                \ftxt<0.1cm,.65cm>{\footnotesize $R_{_{\{\widetilde{0},\widetilde{1}\}}}$}
                \ftxt<0cm,-.2cm>{\footnotesize $u_{_{0}}$}
                \\ \\ \\
  &  & & &  \node\inedge<-.35cm,.3cm>\inedge<-.15cm,.3cm>\inedge<.35cm,.3cm>
                \ftxt<0cm,.4cm>{$\overbrace{\qquad}$}
                \ftxt<0.1cm,.65cm>{\footnotesize $R_{_{\{\widetilde{0},\widetilde{1}\}}}$} \curledge[rrrr]
                \ftxt<1.3cm,-.25cm>{ \footnotesize $\matr{L}_{_{0,n}}$}
                \ftxt<.1cm,-.2cm>{ \footnotesize $w_{_{1}}$}
 & & & &  \node\inedge<-.35cm,.3cm>\inedge<-.15cm,.3cm>\inedge<.35cm,.3cm>
                \ftxt<0cm,.4cm>{$\overbrace{\qquad}$}
               \ftxt<0.1cm,.65cm>{\footnotesize $R_{_{\{\widetilde{0},\widetilde{1}\}}}$}
                \ftxt<.1cm,-.2cm>{ \footnotesize $u_{_{1}}$}
                \\ \\ & \ftxt<3.2cm,-1.5cm>{$\vdots$} &\\
  &  & & &\node\inedge<-.35cm,.3cm>\inedge<-.15cm,.3cm>\inedge<.35cm,.3cm>
                \ftxt<0cm,.4cm>{$\overbrace{\qquad}$}
                \ftxt<.1cm,.65cm>{\footnotesize $R_{_{\{\widetilde{0},\widetilde{2}\}}}$} \curledge[rrrr]
                \ftxt<1.3cm,-.3cm>{ \footnotesize $\matr{L}_{_{0,n}}$}
                \ftxt<.1cm,-.2cm>{ \footnotesize $w_{_{2}}$}
 & & & &  \node\inedge<-.35cm,.3cm>\inedge<-.15cm,.3cm>\inedge<.35cm,.3cm>
               \ftxt<0cm,.4cm>{$\overbrace{\qquad}$}
               \ftxt<.1cm,.65cm>{\footnotesize $R_{_{\{\widetilde{0},\widetilde{1}\}}}$}
                   \ftxt<.1cm,-.2cm>{ \footnotesize $u_{_{2}}$}
                \\
    \node
    \curledge[uuuuuuurrrr]
    \ftxt<1.5cm,2cm>{ \footnotesize $\matr{L}_{_{0,n}}$}
     \ftxt<-.3cm,0cm>{ \footnotesize $x$}
     \ftxt<1.6cm,1.2cm>{ \footnotesize $\matr{L}_{_{1,n}}$}
     \ftxt<1.5cm,-.1cm>{ \footnotesize $\matr{L}_{_{2,n}}$}
     \ftxt<1.5cm,-1.7cm>{ \footnotesize $\matr{L}_{_{m-1,n}}$}
    \curledge[uuuurrrr]\curledge[urrrr]\curledge[ddddddrrrr]
                \\ \\ \\ \\ \\ \\
&  & &
&\node\inedge<-.35cm,-.3cm>\inedge<-.15cm,-.3cm>\inedge<.35cm,-.3cm>
                 \ftxt<0cm,-.4cm>{$\underbrace{\qquad}$}
                \ftxt<.1cm,-.7cm>{\footnotesize $R_{_{\{\widetilde{0},\widetilde{m-1}\}}}$} \curledge[rrrr]
                \ftxt<1.3cm,-.3cm>{ \footnotesize $\matr{L}_{_{0,n}}$}
                \ftxt<.15cm,.2cm>{ \footnotesize $w_{_{m-1}}$}
 & & & &  \node\inedge<-.35cm,-.3cm>\inedge<-.15cm,-.3cm>\inedge<.35cm,-.3cm>
               \ftxt<0cm,-.4cm>{$\underbrace{\qquad}$}
               \ftxt<.1cm,-.65cm>{\footnotesize $R_{_{\{\widetilde{0},\widetilde{1}\}}}$}
                   \ftxt<.1cm,.2cm>{ \footnotesize $u_{_{m-1}}$}
  }
}
\def\diagand{
  \xymatrix@R=.2cm@C=.5cm{
\node \curledge[dddrrr]
\inedge<-.35cm,.3cm>\inedge<-.15cm,.3cm>\inedge<.35cm,.3cm>
                \ftxt<0cm,.4cm>{$\overbrace{\qquad}$}
                \ftxt<-.1cm,.67cm>{\footnotesize $R_{_{\{\widetilde{0},\widetilde{1}\}}}$}
                \ftxt<0cm,-.2cm>{\footnotesize $x$}
                \\ \\ \\
 & & &  \node   \inedge<-.35cm,.3cm>\inedge<-.15cm,.3cm>\inedge<.35cm,.3cm>
                \ftxt<0cm,.4cm>{$\overbrace{\qquad}$}
                \ftxt<0cm,.67cm>{\footnotesize $R_{_{\{\widetilde{0},\widetilde{1},\widetilde{2}\}}}$}
                \curledge[rrrr]
                \ftxt<1.3cm,-.25cm>{ \footnotesize $\matr{L}_{_{1,n}}$}
                \ftxt<.1cm,-.2cm>{ \footnotesize $w$}
 & & & &  \node\inedge<-.35cm,.3cm>\inedge<-.15cm,.3cm>\inedge<.35cm,.3cm>
                \ftxt<0cm,.4cm>{$\overbrace{\qquad}$}
               \ftxt<0cm,.67cm>{\footnotesize $R_{_{\{\widetilde{0},\widetilde{1}\}}}$}
                \ftxt<0cm,-.2cm>{ \footnotesize $z$}
                \\ \\
\node
\inedge<-.35cm,-.3cm>\inedge<-.15cm,-.3cm>\inedge<.35cm,-.3cm>
 \ftxt<0cm,-.4cm>{$\underbrace{\qquad}$}
   \ftxt<-0.05cm,.23cm>{ \footnotesize $y$}
                \ftxt<0cm,-.67cm>{\footnotesize $R_{_{\{\widetilde{0},\widetilde{1}\}}}$}
 & & \node  \edge{0cm}[uur]
              \ftxt<.2cm,0cm>{ \footnotesize $v$}
                     \ftxt<-1.cm,.2cm>{ \footnotesize $-$}
                      \ftxt<-.92cm,.2cm>{ \footnotesize $-$}
                       \ftxt<-.72cm,.2cm>{ \footnotesize $-$}
                        \ftxt<-.52cm,.2cm>{ \footnotesize $-$}
                         \ftxt<-.47cm,.2cm>{ \footnotesize $-$}
   \ftxt<-1.0cm,-.2cm>{ \footnotesize $-$}
    \ftxt<-.92cm,-.2cm>{ \footnotesize $-$}
     \ftxt<-.72cm,-.2cm>{ \footnotesize $-$}
      \ftxt<-.52cm,-.2cm>{ \footnotesize $-$}
      \ftxt<-.47cm,-.2cm>{ \footnotesize $-$}
                    \ftxt<-.7cm,0.05cm>{ \footnotesize $\Cs_{m}^{+}$}
                   \ftxt<-1.09cm,0.042cm>{ \footnotesize $|$}
                   \ftxt<-1.09cm,-0.042cm>{ \footnotesize $|$}
                   \ftxt<-0.37cm,0.043cm>{ \footnotesize $|$}
                   \ftxt<-0.37cm,-0.043cm>{ \footnotesize $|$}
\ftxt<-1.4cm,0cm>{ \footnotesize $-$}
\ftxt<-1.2cm,0cm>{\footnotesize $-$}
\ftxt<-1.13cm,0cm>{\footnotesize$-$}
\ftxt<-.10cm,0cm>{\footnotesize$-$}
\ftxt<-.2cm,0cm>{\footnotesize$-$}
                        \ftxt<-.7cm,1.3cm>{ \footnotesize $\matr{L}_{_{0,n}}$}
     }
}
\def\diagD{
\begin{picture}(300.00,200.00)
\emline{50}{180}{1}{80.00}{180}{2}
\emline{80}{180}{1}{80.00}{100}{2}
\emline{80}{100}{1}{50.00}{100}{2}
\emline{50}{100}{1}{50.00}{180}{2} \put(40,140){\circle*{4}}
\put(55.00,135){$\Ch_{_m}$} \emline{40}{140}{1}{50}{140}{2}
\put(25.00,136){$x$} \put(90,170){\circle*{4}}
\emline{90}{170}{1}{80.00}{170}{2}

\put(90,160){\circle*{4.00}} \emline{90}{160}{1}{80.00}{160}{2}

\put(88.00,130){\vdots}

\put(90.00,110.00){\circle*{4.00}}
\emline{90.00}{110.00}{1}{80.00}{110.00}{2}
\emline{50.00}{90.00}{1}{80.00}{90.00}{2}
\emline{80.00}{90.00}{1}{80.00}{10.00}{2}
\emline{80.00}{10.00}{1}{50.00}{10.00}{2}
\emline{50.00}{10.00}{1}{50.00}{90.00}{2}
\put(40.00,50.00){\circle*{4.00}}
\put(55.00,45.00){$\Ex_{_m}$}\put(25.00,46){$y$}
\emline{40.00}{50.00}{1}{50.00}{50.00}{2}

\put(90.00,80.00){\circle*{4.00}}
\emline{90.00}{80.00}{1}{80.00}{80.00}{2}

\put(90.00,70.00){\circle*{4.00}}
\emline{90.00}{70.00}{1}{80.00}{70.00}{2}

\put(88.00,40){\vdots}

\put(90.00,20.00){\circle*{4.00}}
\emline{90.00}{20.00}{1}{80.00}{20.00}{2}
\emline{240.00}{140.00}{1}{270.00}{140.00}{2}
\emline{270.00}{140.00}{1}{270.00}{60.00}{2}
\emline{270.00}{60.00}{1}{240.00}{60.00}{2}
\emline{240.00}{60.00}{1}{240.00}{140.00}{2}
\put(280.00,100.00){\circle*{4.00}}
\put(245.00,95.00){$\Ps_{_m}$}\put(290.00,95){$z$}
\emline{270.00}{100.00}{1}{280.00}{100.00}{2}

\put(230.00,130.00){\circle*{4.00}}
\emline{230.00}{130.00}{1}{240.00}{130.00}{2}

\put(230.00,120.00){\circle*{4.00}}
\emline{230.00}{120.00}{1}{240.00}{120.00}{2}

\put(228.00,90){\vdots}

\put(230.00,70.00){\circle*{4.00}}
\emline{230.00}{70.00}{1}{240.00}{70.00}{2}
\emline{180.0}{160.00}{1}{215.00}{160.00}{2}
\emline{215.00}{160.00}{1}{215.00}{140.00}{2}
\emline{215.00}{140.00}{1}{180.00}{140.00}{2}
\emline{180.00}{140.00}{1}{180.0}{160.00}{2} \put(187,147){$\Oor_{_{m}}$}

\emline{180.0}{130.00}{1}{215.00}{130.00}{2}
\emline{215.00}{130.00}{1}{215.00}{110.00}{2}
\emline{215.00}{110.00}{1}{180.00}{110.00}{2}
\emline{180.00}{110.00}{1}{180.0}{130.00}{2} \put(182.00,117){$\Oand_{_{m}}$}

\emline{180.0}{50.00}{1}{215.00}{50.00}{2}
\emline{215.00}{50.00}{1}{215.00}{30.00}{2}
\emline{215.00}{30.00}{1}{180.00}{30.00}{2}
\emline{180.00}{30.00}{1}{180.0}{50.00}{2} \put(182.00,37){$\Oand_{_{m}}$}
\put(195.00,75){\vdots}
\emline{90.0}{80.00}{1}{135.00}{147.00}{2}
\put(173.00,147.00){\circle*{4.00}}

\emline{135.00}{147.00}{1}{140.00}{147.00}{2}
\put(143.00,151){$_{_{\Onot_{_{m}}}}$}
\emline{165.00}{147.00}{1}{180.00}{147.00}{2}

\emline{141.0}{155.00}{1}{165.00}{155.00}{2}
\emline{165.00}{155.00}{1}{165.00}{143.00}{2}
\emline{165.00}{143.00}{1}{141.00}{143.00}{2}
\emline{141.00}{143.00}{1}{141.0}{155.00}{2}

\emline{90.0}{70.00}{1}{180.00}{117.00}{2}
\emline{90.0}{20.00}{1}{180.00}{35.00}{2}

\emline{90.0}{170.00}{1}{180.00}{156.00}{2}
\emline{90.0}{160.00}{1}{180.00}{126.00}{2}
\emline{90.0}{110.00}{1}{180.00}{44.00}{2}

\emline{215.00}{150.00}{1}{230.0}{130.00}{2}
\emline{215.00}{120.00}{1}{230.0}{120.00}{2}
\emline{215.00}{40.00}{1}{230.0}{70.00}{2}

\end{picture}
}
\def\diagP{
\begin{picture}(300.00,250.00)
\emline{50}{280}{1}{80.00}{280}{2}
\emline{80}{280}{1}{80.00}{200}{2}
\emline{80}{200}{1}{50.00}{200}{2}
\emline{50}{200}{1}{50.00}{280}{2} \put(40,240){\circle*{4}}
\put(55.00,235){$\Ch_{_m}$} \emline{40}{240}{1}{50}{240}{2}
\put(22.00,236){$x_{_0}$}
\put(90,270){\circle*{4}} \emline{90}{270}{1}{80.00}{270}{2}

\put(90,260){\circle*{4.00}} \emline{90}{260}{1}{80.00}{260}{2}

\put(88.00,230){\vdots}

\put(90.00,210.00){\circle*{4.00}}
\emline{90.00}{210.00}{1}{80.00}{210.00}{2}
\emline{50}{190}{1}{80.00}{190}{2}
\emline{80}{190}{1}{80.00}{110}{2}
\emline{80}{110}{1}{50.00}{110}{2}
\emline{50}{110}{1}{50.00}{190}{2} \put(40,150){\circle*{4}}
\put(55.00,145){$\Ch_{_m}$} \emline{40}{150}{1}{50}{150}{2}
\put(22.00,146){$x_{_1}$} \put(90,180){\circle*{4}}
\emline{90}{180}{1}{80.00}{180}{2}

\put(90,170){\circle*{4.00}} \emline{90}{170}{1}{80.00}{170}{2}

\put(88.00,140){\vdots}

\put(90.00,120.00){\circle*{4.00}}
\emline{90.00}{120.00}{1}{80.00}{120.00}{2}
\emline{50.00}{60.00}{1}{80.00}{60.00}{2}
\emline{80.00}{60.00}{1}{80.00}{-20.00}{2}
\emline{80.00}{-20.00}{1}{50.00}{-20.00}{2}
\emline{50.00}{-20.00}{1}{50.00}{60.00}{2}
\put(40.00,20.00){\circle*{4.00}}
\put(55.00,15.00){$\Ch_{_m}$}\put(8.00,16){$x_{_{m-1}}$}

\emline{40.00}{20.00}{1}{50.00}{20.00}{2}

\put(90.00,50.00){\circle*{4.00}}
\emline{90.00}{50.00}{1}{80.00}{50.00}{2}

\put(90.00,40.00){\circle*{4.00}}
\emline{90.00}{40.00}{1}{80.00}{40.00}{2}

\put(88.00,10){\vdots}

\put(90.00,-10.00){\circle*{4.00}}
\emline{90.00}{-10}{1}{80.00}{-10.00}{2}
\emline{220.00}{170.00}{1}{250.00}{170.00}{2}
\emline{250.00}{170.00}{1}{250.00}{90.00}{2}
\emline{250.00}{90.00}{1}{220.00}{90.00}{2}
\emline{220.00}{90.00}{1}{220.00}{170.00}{2}
\put(260.00,130.00){\circle*{4.00}}
\put(225.00,125.00){$\Ps_{_m}$}\put(270,125){t}
\emline{250.00}{130.00}{1}{260.00}{130.00}{2}

\put(210.00,160.00){\circle*{4.00}}
\emline{210.00}{160.00}{1}{220.00}{160.00}{2}

\put(210.00,150.00){\circle*{4.00}}
\emline{210.00}{150.00}{1}{220.00}{150.00}{2}

\put(208.00,120){\vdots}

\put(210.00,100.00){\circle*{4.00}}
\emline{210.00}{100.00}{1}{220.00}{100.00}{2}
\emline{150.0}{230.00}{1}{195.00}{230.00}{2}
\emline{195.00}{230.00}{1}{195.00}{180.00}{2}
\emline{195.00}{180.00}{1}{150.00}{180.00}{2}
\emline{150.00}{180.00}{1}{150.0}{230.00}{2}
\put(154.20,197){$\Vand_{_m}$}
\emline{150.0}{170.00}{1}{190.00}{170.00}{2}
\emline{190.00}{170.00}{1}{190.00}{120.00}{2}
\emline{190.00}{120.00}{1}{150.00}{120.00}{2}
\emline{150.00}{120.00}{1}{150.0}{170.00}{2}
\put(158.00,140){$\Vor_{_m}$}

\put(168.00,97){\vdots}

\emline{150.0}{80.00}{1}{190.00}{80.00}{2}
\emline{190.00}{80.00}{1}{190.00}{30.00}{2}
\emline{190.00}{30.00}{1}{150.00}{30.00}{2}
\emline{150.00}{30.00}{1}{150.0}{80.00}{2}
\put(158.00,50){$\Vor_{_m}$}
\emline{90.0}{270.00}{1}{150.00}{223.00}{2}
\emline{90.0}{260.00}{1}{150.00}{159.00}{2}
\emline{90.0}{210.00}{1}{150.00}{67.00}{2}
\put(63,80){\vdots}\put(143,131){\vdots}\put(143,196){\vdots}\put(143,45){\vdots}

\emline{90.0}{180.00}{1}{150.00}{221.00}{2}
\emline{90.0}{170.00}{1}{150.00}{157.00}{2}
\emline{90.0}{120.00}{1}{150.00}{65.00}{2}

\emline{90.0}{50.00}{1}{150.00}{187.00}{2}
\emline{90.0}{40.00}{1}{150.00}{127.00}{2}
\emline{90.0}{-10.00}{1}{150.00}{35.00}{2}

\emline{195.0}{205.00}{1}{210.00}{160.00}{2}
\emline{190.0}{145.00}{1}{210.00}{150.00}{2}
\emline{190.0}{55.00}{1}{210.00}{100.00}{2}

\end{picture}
}
\def\diagMux{
\begin{picture}(300.00,200.00)

\put(-52.00,85){$\begin{array}{ll} &\quad \left \{
\begin{array}{ll}
                                                              \\
                                                              \\
                                                              \\
                                                              \\
                                                              \\
                                                              \\
                                                              \\
                                                              \\
                                                              \\
                                                              \end{array}\right.\\
&\end{array} $}

\put(10.00,200.00){$\shortmid$} \put(300.00,200.00){$\shortmid$}
\put(10.00,190.00){$\shortmid$} \put(300.00,190.00){$\shortmid$}
\put(10.00,180.00){$\shortmid$} \put(300.00,180.00){$\shortmid$}
\put(10.00,170.00){$\shortmid$} \put(300.00,170.00){$\shortmid$}
\put(10.00,160.00){$\shortmid$} \put(300.00,160.00){$\shortmid$}
\put(10.00,150.00){$\shortmid$} \put(300.00,150.00){$\shortmid$}
\put(10.00,140.00){$\shortmid$} \put(300.00,140.00){$\shortmid$}
\put(10.00,130.00){$\shortmid$} \put(300.00,130.00){$\shortmid$}
\put(10.00,120.00){$\shortmid$} \put(300.00,120.00){$\shortmid$}
\put(10.00,110.00){$\shortmid$} \put(300.00,110.00){$\shortmid$}
\put(10.00,100.00){$\shortmid$} \put(300.00,100.00){$\shortmid$}
\put(10.00,90.00){$\shortmid$} \put(300.00,90.00){$\shortmid$}
\put(10.00,80.00){$\shortmid$} \put(300.00,80.00){$\shortmid$}
\put(10.00,70.00){$\shortmid$} \put(300.00,70.00){$\shortmid$}
\put(10.00,60.00){$\shortmid$} \put(300.00,60.00){$\shortmid$}
\put(10.00,50.00){$\shortmid$} \put(300.00,50.00){$\shortmid$}
\put(10.00,40.00){$\shortmid$} \put(300.00,40.00){$\shortmid$}
\put(10.00,30.00){$\shortmid$} \put(300.00,30.00){$\shortmid$}
\put(10.00,20.00){$\shortmid$} \put(300.00,20.00){$\shortmid$}
\put(10.00,10.00){$\shortmid$} \put(300.00,10.00){$\shortmid$}
\put(10.00,00.00){$\shortmid$} \put(300.00,00.00){$\shortmid$}
\put(10.00,200.00){- - - -  - - - - - - - - - - - - - - - - - - - -
- - - - -  } \put(10.00,00.00){- - - -  - - - - - - - - - - - - - -
- - - - - - - - - - -   }
\emline{150.0}{195.00}{1}{190.00}{195.00}{2}
\emline{190.00}{195.00}{1}{190.00}{165.00}{2}
\emline{190.00}{165.00}{1}{150.00}{165.00}{2}
\emline{150.00}{165.00}{1}{150.0}{195.00}{2} \put(157,175){$\Dot_{_{m}}$}

\emline{0.0}{170.00}{1}{150.00}{170.00}{2}
\emline{125.0}{190.00}{1}{150.00}{190.00}{2}
\put(0,170){\circle*{4.00}}
 \emline{190}{180.00}{1}{210.00}{180}{2}
\put(210,180.00){\circle*{4.00}}

\emline{150.0}{155.00}{1}{190.00}{155.00}{2}
\emline{190.00}{155.00}{1}{190.00}{125.00}{2}
\emline{190.00}{125.00}{1}{150.00}{125.00}{2}
\emline{150.00}{125.00}{1}{150.0}{155.00}{2}
\put(157,135){$\Dot_{_{m}}$}

\emline{0.0}{130.00}{1}{150.00}{130.00}{2}
\emline{102}{150.00}{1}{150.00}{150.00}{2}
\put(0.00,130.00){\circle*{4.00}}

\emline{190}{140.00}{1}{210.00}{140.00}{2}
\put(210,140.00){\circle*{4.00}}
\emline{150.0}{50.00}{1}{190.00}{50.00}{2}
\emline{190.00}{50.00}{1}{190.00}{20.00}{2}
\emline{190.00}{20.00}{1}{150.00}{20.00}{2}
\emline{150.00}{20.00}{1}{150.0}{50.00}{2} \put(157,30){$\Dot_{_{m}}$}

\emline{0.0}{25.00}{1}{150.00}{25.00}{2}
\emline{50.0}{45.00}{1}{150.00}{45.00}{2}
\put(0.00,25.00){\circle*{4.00}}

\emline{190}{30.00}{1}{210.00}{30.00}{2}
\put(210,30.00){\circle*{4.00}}

\emline{250.0}{140.00}{1}{290.00}{140.00}{2}
\emline{290.00}{140.00}{1}{290.00}{90.00}{2}
\emline{290.00}{90.00}{1}{250.00}{90.00}{2}
\emline{250.00}{90.00}{1}{250.0}{140.00}{2}
\put(263.00,110){$\Ext_{_m}$} \put(315.00,110){$t$}

\emline{210.0}{180.00}{1}{250.00}{135.00}{2}
\emline{210.0}{140.00}{1}{250.00}{128.00}{2}
\emline{210.0}{30.00}{1}{250.00}{100.00}{2}

\emline{290.0}{115.00}{1}{310.00}{115.00}{2}\put(310.00,115.00){\circle*{4.00}}
\emline{125}{220}{1}{125.00}{190.00}{2}\put(125.00,220.00){\circle*{4.00}}
\emline{102.0}{220.00}{1}{102.00}{150.00}{2}\put(102.00,220.00){\circle*{4.00}}
\emline{50.0}{220.00}{1}{50.00}{45.00}{2}\put(50.00,220.00){\circle*{4.00}}

\put(-28,95){$\vect{v}$}

\put(41,227){$\overbrace{         \ \ \ \ \ \ \ \ \ \ \ \ \ \ \ \ \
\ }$}

 \put(83,240){$\vect{u}$}

\put(2,70){\vdots}\put(170,85){\vdots}  \put(245,105){\vdots}

\put(70,218){\ldots}

\end{picture}
}
\def\diagDel{
\begin{picture}(300.00,200.00)

\emline{200.0}{150.00}{1}{280.00}{150.00}{2}
\emline{280.00}{150.00}{1}{280.00}{125.00}{2}
\emline{280.00}{125.00}{1}{200.00}{125.00}{2}
\emline{200.00}{125.00}{1}{200.0}{150.00}{2}
\put(228,133){$\Ch_{_m}$}\put(225,112){\ldots}

\emline{0.0}{165.00}{1}{240.00}{165.00}{2}
\emline{240}{165.00}{1}{240.00}{150.00}{2}
\put(0.00,165.00){\circle*{4.00}}

\emline{270}{125.00}{1}{270.00}{100.00}{2}
\emline{255}{125.00}{1}{255.00}{100.00}{2}
\emline{210}{125.00}{1}{210.00}{100.00}{2}

\put(270,113.00){\circle*{4.00}} \put(255,113.00){\circle*{4.00}}
\put(210,113.00){\circle*{4.00}}
\emline{30.0}{35.00}{1}{55.00}{35.00}{2}
\emline{55.00}{35.00}{1}{55.00}{16.00}{2}
\emline{55.00}{16.00}{1}{30.00}{16.00}{2}
\emline{30.00}{16.00}{1}{30.0}{35.00}{2} \put(38.00,20){$F$}
\emline{150.0}{35.00}{1}{175.00}{35.00}{2}
\emline{175.00}{35.00}{1}{175.00}{16.00}{2}
\emline{175.00}{16.00}{1}{150.00}{16.00}{2}
\emline{150.00}{16.00}{1}{150.0}{35.00}{2} \put(156.00,20){$F$}

\emline{200.0}{100.00}{1}{280.00}{100.00}{2}
\emline{280.00}{100.00}{1}{280.00}{20.00}{2}
\emline{280.00}{20.00}{1}{200.00}{20.00}{2}
\emline{200.00}{20.00}{1}{200.0}{100.00}{2}
\put(225.00,60){$\Mux_{_m}$} \put(193,52){$\vdots$}
\put(93,27){\ldots}

\emline{280.00}{65.00}{1}{315.0}{65.00}{2}
\put(315,65.00){\circle*{4.00}}

\emline{20.0}{93.00}{1}{200.00}{93.00}{2}
\emline{90.00}{77.00}{1}{200.00}{77.00}{2}
\emline{175.00}{27.00}{1}{200.00}{27.00}{2}

\emline{20.0}{93.00}{1}{0.00}{27.00}{2}
\emline{90.00}{77.00}{1}{75.00}{27.00}{2}

\emline{132.00}{27.00}{1}{135.00}{42.00}{2}
\emline{135.00}{42.00}{1}{200.00}{42.00}{2}

\put(75,27){\circle*{4.00}}
\put(132,27){\circle*{4.00}}\put(188,27){\circle*{4.00}}
\emline{00.0}{27.00}{1}{20.00}{27.00}{2} \put(0,27){\circle*{4.00}}
\emline{20.0}{27.00}{1}{30.00}{27.00}{2}

\emline{55.0}{27.00}{1}{85.00}{27.00}{2}

\emline{120.0}{27.00}{1}{150.00}{27.00}{2}

\put(320.00,60){$t$} \put(-15.00,161){$y$}  \put(-15.00,15){$v_{_{0}}$}
                                            \put(70.00,15){$v_{_{1}}$}
                                            \put(118.00,15){$v_{_{m-2}}$}
                                            \put(180.00,15){$v_{_{m-1}}$}
\end{picture}
}
\newtheorem{precor}{{\bf Corollary}}
\newtheorem{precon}{{\bf Conjecture}}
\newtheorem{predefin}{{\bf Definition}}
\newenvironment{defin}[1]{\begin{predefin}{\hspace{-0.5
                   em}{\bf.\ }}{\rm
#1}\hfill{$\spadesuit$}}{\end{predefin}}
\newtheorem{preexm}{{\bf Example}}
\newtheorem{preappl}{{\bf Application}}
\newtheorem{prelem}{{\bf Lemma}}
\newenvironment{lem}{\begin{prelem}{\hspace{-0.5
               em}{\bf.\ }}}{\end{prelem}}
\newtheorem{preproof}{{\bf Proof.\ }}
\newenvironment{proof}[1]{\begin{preproof}{\rm
               #1}\hfill{$\blacksquare$}}{\end{preproof}}
\newtheorem{presproof}{{\bf Sketch of Proof.\ }}
\newtheorem{prethm}{{\bf Theorem}}
\newenvironment{thm}{\begin{prethm}{\hspace{-0.5
               em}{\bf.\ }}}{\end{prethm}}
\newtheorem{prealphthm}{{\bf Theorem}}
\newtheorem{prepro}{{\bf Proposition}}
\newenvironment{pro}{\begin{prepro}{\hspace{-0.5
               em}{\bf.\ }}}{\end{prepro}}
\newtheorem{preprb}{{\bf Problem}}
\def\eg {e}
\def\Kv {\widetilde}
\def\Lkn {{ \matr{L}_{_{k,n}}}[x,y;R]}
\def\Id {{\sf id}}
\def\Ch {{\sf ch}}
\def\Ps {{\sf ps}}
\def\Cs {{\sf cs}}
\def\Ex {{\sf xp}}
\def\Ext {{\sf xt}}
\def\Onot {{\sf not}}
\def\Oand {{\sf and}}
\def\Oor {{\sf or}}
\def\Vand {{\sf vand}}
\def\Vor {{\sf vor}}
\def \Mux {{\sf Mux}}
\def \Edg {{\sf Edg}}
\def \Dot {{\sf dot}}
\def\Add {{\sf add}}
\def\Sub {{\sf sub}}
\def\isdef{\mbox {$\ \stackrel{\rm def}{=} \ $}}
\begin{document}
\footnotetext[1]{Correspondence should be addressed to {\tt
daneshgar@sharif.ir}.}
\begin{center}
{\Large \bf  Graph Coloring and Function Simulation}\\
\vspace*{0.5cm}
{\bf Amir Daneshgar}\\
{\it Department of Mathematical Sciences} \\
{\it Sharif University of Technology} \\
{\it P.O. Box {\rm 11155--9415}, Tehran, Iran,}\\ \ \\
{\bf Ali Reza Rahimi}\\
{\it Institute of Mathematics Research} \\
{\it Tarbiat Moalem University} \\
{\it Tehran, Iran,}\\ \ \\
{\bf Siamak Taati}\\
{\it Universit\'e de Nice-Sophia Antipolis\\
     Laboratoire I$3$S, $2000$, route des Lucioles\\
     $06903$ Sophia Antipolis, France.
} \\ \ \\
\end{center}
\begin{abstract}

We prove that every partial function with finite domain and range can be effectively
simulated through sequential colorings of graphs.
Namely, we show that given a finite set $S=\{0,1,\ldots,m-1\}$ and a number $n \geq \max\{m,3\}$, any partial function
$\varphi:S^{^p} \to S^{^q}$ (i.e. it may not be defined on some elements of its domain $S^{^p}$) can be effectively (i.e. in polynomial time) transformed to a simple graph $\matr{G}_{_{\varphi,n}}$
 along with three sets of specified vertices
$$X = \{x_{_{0}},x_{_{1}},\ldots,x_{_{p-1}}\}, \ \ Y = \{y_{_{0}},y_{_{1}},\ldots,y_{_{q-1}}\}, \ \ R = \{\Kv{0},\Kv{1},\ldots,\Kv{n-1}\},$$
such that any assignment $\sigma_{_{0}}: X  \cup R \to \{0,1,\ldots,n-1\} $
with
$\sigma_{_{0}}(\Kv{i})=i$ for all $0 \leq i < n$, is {\it
uniquely} and {\it effectively} extendable to a proper $n$-coloring $\sigma$ of
$\matr{G}_{_{\varphi,n}}$ for which we have
$$\varphi(\sigma(x_{_{0}}),\sigma(x_{_{1}}),\ldots,\sigma(x_{_{p-1}}))=(\sigma(y_{_{0}}),\sigma(y_{_{1}}),\ldots,\sigma(y_{_{q-1}})),$$
unless $(\sigma(x_{_{0}}),\sigma(x_{_{1}}),\ldots,\sigma(x_{_{p-1}}))$ is not in the domain of $\varphi$ (in which case $\sigma_{_{0}}$  has no extension to a proper $n$-coloring of $\matr{G}_{_{\varphi,n}}$).
\vspace*{.7 cm}\\
{\bf Index Words:}\ {\footnotesize sequential graph coloring, uniquely colorable graphs, defining sets, graph grammars/amalgams, function evaluation, quantum computation.}\\
{\bf MR Subject Classification:}\ 05C15.\\
\end{abstract}
\setcounter{footnote}{1}
\section{Introduction}\label{INTRO}
Graph coloring, as a special case of the graph homomorphism
problem, is widely known to be among the most fundamental problems
in graph theory and combinatorics.
Based on some recent deep
non-approximability results
(see e.g. \cite{ARO09,GON07,GHS02,HENE04,JETO95,RASU07} and references therein),
it is known that the coloring problem is among the hardest problems in
${\bf NP}$ and does not admit an efficient solution/approximation,
unless ${\bf P}={\bf NP}$ (or a similar coincidence
for randomized classes as ${\bf RP}$). \\
Although the hardness of the graph coloring problem causes troubles
in applications that require finding graph colorings,
it is desirable in other applications,
such as cryptography and data security, where
such hardness conditions are sought.
In cryptography, hardness properties are particularly appreciated
when trapdoor keys are also available.\\
The idea of using combinatorial structures in cryptography has already been studied
by a number of authors (e.g. see \cite{BLA97,CGS98,CDS94,EUG92,FNX04,FILA06,KUKO05})
who, among other things, have used such structures to construct
secret sharing schemes.
Although, the idea of using combinatorial structures and their hardness results seems to be fruitful in cryptography, most of the cryptographic schemes introduced so far do not satisfy real-world security or efficiency conditions
that are needed in real applications.\\
The idea of using graph colorings and their hardness results in cryptography
has also been studied by many contributors in the field.
Using uniquely colorable graphs in this context seems to go back
to the ideas of the first author of this article who already hinted
at this in~\cite{DAN01}.
As the implementation of such applications needs
a well-understood theory of computation in terms of graph colorings,
it sounds reasonable to develop such a theory first, before putting such an application in the machinery (see \cite{DE08}).\\
This article can be considered as a sequel to \cite{DE08} and \cite{DHT?}, in which we try to complete the scenario of computability in terms of
graph colorings as much as it is needed in the above-mentioned context (which is restricted to the case of functions with finite domain and range).
It is worth noting that due to the hardness properties of the graph coloring problem, it is by no means straightforward to see how one may construct
efficiently colorable graphs
that can compute functions. The necessary background has already been developed in~\cite{DE08,DHT?}.  In this article, as our main result, we introduce a general setup in which one may efficiently simulate functions (with finite domain and range) using graphs and their colorings. The cryptographic applications of these results will appear elsewhere.\\
Roughly speaking,
our main result (Theorems~\ref{thrm:main} and~\ref{thrm:effectivealg}) states that any given partial function $\varphi:S^{^p} \to S^{^q}$
 (where $S \isdef \{0,1,\ldots,m-1\}$ is a finite set of symbols and $p$ and $q$ are positive integers)
can be effectively simulated by a simple graph $\matr{G}_{_{\varphi,n}}$ and its $n$-colorings ($n \geq \max\{m,3\}$) in the
following sense. The graph $\matr{G}_{_{\varphi,n}}$ has three subsets of designated vertices: the input
vertices $X = \{x_{_{0}},x_{_{1}},\ldots,x_{_{p-1}}\}$, the output vertices $Y = \{y_{_{0}},y_{_{1}},\ldots,y_{_{q-1}}\}$, and the
reference vertices $R = \{\Kv{0},\Kv{1},\ldots,\Kv{n-1}\}$.
The reference vertices induce a complete graph in $\matr{G}_{_{\varphi,n}}$, and their role is to establish a
correspondence between the colors and the elements of $S$. In every possible $n$-coloring
of $\matr{G}_{_{\varphi,n}}$, the reference vertices take distinct colors. For each $0 \leq i < m$, the color of
the vertex $\Kv{i} \in R$ stands for the element $i \in S$. Since the naming of the colors is
irrelevant, we often choose the color set $C \isdef \{0,1,\ldots,n-1\}$ and restrict ourselves
to the colorings in which every reference vertex $\Kv{i}$ takes the corresponding color $i$.
Now consider the proper $n$-colorings of the graph $\matr{G}_{_{\varphi,n}}$. The graph $\matr{G}_{_{\varphi,n}}$ has the property
that, in reference to the colors of the vertices in $R$, the constraint induced by $\matr{G}_{_{\varphi,n}}$ on
the colors of the vertices in $X$ and $Y$ corresponds precisely with the calculation of $\varphi$.
Namely, if for each $0 \leq i < n$ we color the vertex $\Kv{i}$ with $\sigma_{_{0}}(\Kv{i})=i$, and for each
$0 \leq j < p$ we choose an arbitrary color $\sigma_{_{0}}(x_{_{j}})$ for $x_{_{j}}$, then this assignment
of colors can be extended to a proper coloring $\sigma$ of the whole $\matr{G}_{_{\varphi,n}}$ using an appropriate universal sequential algorithm, if and only if
$(\sigma_{_{0}}(x_{_{0}}),\sigma_{_{0}}(x_{_{1}}),\ldots,\sigma_{_{0}}(x_{_{p-1}}))$ is in the domain of $\varphi$, in which case the extension is
unique and we have
$$\varphi(\sigma(x_{_{0}}),\sigma(x_{_{1}}),\ldots,\sigma(x_{_{p-1}}))=(\sigma(y_{_{0}}),\sigma(y_{_{1}}),\ldots,\sigma(y_{_{q-1}})).$$
Let us emphasize few aspects of this construction:
\begin{itemize}
\item{The construction of the graph $\matr{G}_{_{\varphi,n}}$ from the function $\varphi$ is effective (i.e., can be obtained using a
polynomial-time algorithm (see Section~\ref{sec:main})).}
\item{The number of colors $n$ can be chosen as small as $\max\{m,3\}$ (see Theorem~\ref{thrm:main}).}
\item{The coloring extensions are unique, and can be found using an efficient (i.e. polynomial-time) sequential coloring algorithm (see Theorem~\ref{thrm:effectivealg}).}
\item{With the above notion of simulation, the graphs can be combined readily to
implement function compositions. For example, if a function $\varphi$ is simulated by
a graph $\matr{G}_{_{\varphi,n}}$ and a function $\psi$  is simulated by a graph $\matr{G}_{_{\psi,n}}$, then the composition
  $\psi \circ \varphi$ can be implemented by simply putting $\matr{G}_{_{\psi,n}}$ and $\matr{G}_{_{\varphi,n}}$ next to each other,
identifying their reference vertices, and identifying the output vertices of $\matr{G}_{_{\varphi,n}}$
with the input vertices of $\matr{G}_{_{\psi,n}}$  (see Lemma~\ref{lem:combination} below).}
\end{itemize}
In \cite{DHT?} it is proved that Boolean functions (i.e. the case $m=2$ and $q=1$) can be effectively  simulated through $3$-colorings.
In this paper besides generalizing this result for any given function (as mentioned above), we have chosen an approach (among many other possible methods) in which we
explicitly show how to simulate basic Boolean and arithmetic operations
in the above sense (to be made precise in the sequel).
Our approach can be described as follows.  We first prove our main result for permutations through a constructive approach and next
we apply an extension result (see Lemma~\ref{lem:geninvfunc}) similar to what is usually done in quantum computing (e.g. see \cite{NICH00}), to generalize this result to arbitrary functions.  To simulate multivariate functions, we show how to simulate the coloring space of a graph using a large number of colors with the coloring space of a more elaborate graph using a limited number of colors.\\
In the rest of this section we go through some necessary prerequisites. In Section~\ref{sec:invertiblefuncs} we prove our result for permutations.
Next, in Section~\ref{sec:gadgets} we show that most important basic Boolean and arithmetic operations can be effectively simulated through colorings of graphs, and in Section~\ref{sec:main} we prove our main theorems.
\subsection{Simulation of functions by graphs: a starter}
 In this paper, we consider finite simple graphs and their
vertex colorings. Given a graph $\matr{G}=(V(\matr{G}),E(\matr{G}))$
on the vertex set $V(\matr{G})$ and with the edge set $E(\matr{G})$,
a proper $C$-coloring of $\matr{G}$ is a mapping
$\sigma: V(\matr{G}) \to C$ such that  for every edge $uv \in
E(\matr{G})$ we have $\sigma(u) \not = \sigma(v)$.
We often do not care about the nature of the colors, and when $|C|=n$, we
may simply talk about proper $n$-colorings
(or $n$-colorings, for short) rather than $C$-colorings. The smallest integer $n$ for
which the graph $\matr{G}$ admits an $n$-coloring is called the
chromatic number of $\matr{G}$
and is denoted by $\chi(\matr{G})$ (e.g. see \cite{JETO95,WES96} for the basic concepts of graph theory and graph colorings).\\
In what follows vectors and ordered lists are denoted by bold small symbols as $\vect{u}=(u_{_{0}},\ldots,u_{_{n-1}})$. We also fix the following notations:
$$\vect{0}_{_{n}} \isdef (\underbrace{0,0,\ldots,0}_{n~{\rm times}}), \quad  \vect{1}_{_{n}} \isdef (\underbrace{1,1,\ldots,1}_{n~{\rm times}}) \quad
{\rm and} \quad \vect{1}_{_{n}}^{^j} \isdef (\underbrace{\overbrace{0,0,\ldots,0}^{j~{\rm times}},1,0,\ldots,0}_{n~{\rm components}}).$$
Subsets in subscripts are used to show exclusion. In this regard, $S_{_{A}}\isdef S-A$ for any $A \subseteq S$ and
$$
\vect{u}_{_{\{k\}}} \isdef \left \{
   \begin{array}{ll}
   (u_{_{1}},u_{_{2}},\ldots,u_{_{{n-1}}}) &
   		\text{if $k=0$,} \\
   (u_{_{0}},\ldots,u_{_{k-1}},u_{_{k+1}},\ldots,u_{_{{n-1}}}) &
   		\text{if $0<k<n-1$,} \\
   (u_{_{0}},u_{_{1}},\ldots,u_{_{{n-2}}}) &
   		\text{if $k=n-1$.}
   \end{array}\right.
$$
Hereafter, we shall consider the set of all one-to-one and onto (i.e.
invertible) functions  from $S$ to $S$ as the symmetric group of all
$n$-permutations on $n$ elements.  In this setting
$\tau(i,j)\isdef (0)(1)(2)\cdots(ij)\cdots(n-1)$ denotes a transposition
that maps $i$ to $j$ and vice versa and keeps all the other elements unchanged.
Also, $\Id_{_{m}}$ stands for the identity function on $S$.\\
Throughout this article, we always assume
that $p, q, n$ and $m$ are natural number such that $m \geq 2$ and $n \geq \max\{m,3\}$.
We use
$$C \isdef \{0,1,\ldots,n-1\}, \quad S \isdef \{0,1,\ldots,m-1\}$$
as the sets of colors and symbols, respectively.
The sets
$$X \isdef \{x_{_{0}},x_{_{1}},\ldots,x_{_{p-1}}\}, \ \ Y \isdef \{y_{_{0}},y_{_{1}},\ldots,y_{_{q-1}}\}, \ \ R \isdef \{\Kv{0},\Kv{1},\ldots,\Kv{n-1}\}$$
are also interpreted as the sets of input, output, and reference
vertices, respectively (cf. below).
\begin{defin}{\label{defin:functionsim}
Let $p\geq 1$, $q\geq 1$ and also let
$\varphi:S^{^p} \to S^{^q}$ be a given
{\it partial} function (i.e. $\varphi$ may not be defined
on some of the elements of its domain as it is usually defined in theory of computation; see e.g.~\cite{BRI94}).
 We say that a graph $\matr{G}_{_{\varphi,n}}$ {\it simulates}
 the function $\varphi$ through $n$-colorings, if
$\matr{G}_{_{\varphi,n}}$ is a graph on the vertex set
$V(\matr{G}_{_{\varphi,n}})$ such that
\begin{itemize}
\item[i)]{The graph $\matr{G}_{_{\varphi,n}}$ is $n$-colorable
(i.e. $\chi(\matr{G}_{_{\varphi,n}}) \leq n$), and
$$X \cup Y \cup R \subseteq V(\matr{G}_{_{\varphi,n}})\;.$$}
\item[ii)]{In any $n$-coloring of $\matr{G}_{_{\varphi,n}}$ as $\sigma$,
vertices in $R$ are forced to take
different colors. (Therefore, without loss of generality we may
assume that for all $0 \leq i < n$ we have
$\sigma(\Kv{i})=i$.)}
\item[iii)]{Any assignment of  colors
$\sigma_{_{0}}: X \cup R \to C$,
with
$\sigma_{_{0}}(\Kv{i})=i$ for all $0 \leq i < n$,
has an extension to a proper $C$-coloring of the whole graph
$\matr{G}_{_{\varphi,n}}$ as $\sigma$
if and only if
$(\sigma(x_{_{0}}),\sigma(x_{_{1}}),\ldots,\sigma(x_{_{p-1}}))$ is in the domain of $\varphi$,
in which case the extension $\sigma$ is unique and
$$\varphi(\sigma(x_{_{0}}),\sigma(x_{_{1}}),\ldots,\sigma(x_{_{p-1}}))=(\sigma(y_{_{0}}),\sigma(y_{_{1}}),\ldots,\sigma(y_{_{q-1}})).$$
}
\end{itemize}
}\end{defin}
The following lemma easily follows from the definitions.  We explicitly state it for further reference.
\begin{lem}\label{lem:combination}
Let $\varphi_{_{i}}: S^{^p} \to S^{^{q_{_{i}}}}\;\; (i=1,2,\ldots,k)$
and $\psi: S^{^q} \to S^{^r}$ be partial functions, where
$p_{_i}$, $q_{_i}$, $q$ and $r$ are positive integers
with $\sum_{i=1}^k q_i = q$.
If for some $n\geq 3$, the functions $\varphi_{_i}$ and $\psi$
can be simulated through $n$-colorings,
the composition
$$\theta \isdef \psi \circ (\varphi_{_1},\varphi_{_2},\ldots,\varphi_{_k})$$
can also be simulated through $n$-colorings.
\end{lem}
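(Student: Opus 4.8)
The plan is to realize $\matr{G}_{_{\theta,n}}$ as an amalgam of the simulating graphs $\matr{G}_{_{\varphi_{_{1}},n}},\dots,\matr{G}_{_{\varphi_{_{k}},n}}$ and $\matr{G}_{_{\psi,n}}$ (which, by taking fresh disjoint copies, we may assume have pairwise disjoint $X$-, $Y$- and $R$-sets inside each graph), and then to verify the three clauses of Definition~\ref{defin:functionsim} one at a time. Starting from the disjoint union of these $k+1$ graphs I would perform three identifications: (a) merge the reference sets of all $k+1$ graphs into one set $R=\{\Kv{0},\dots,\Kv{n-1}\}$; (b) merge the input sets of $\matr{G}_{_{\varphi_{_{1}},n}},\dots,\matr{G}_{_{\varphi_{_{k}},n}}$ into one set $X=\{x_{_{0}},\dots,x_{_{p-1}}\}$; (c) glue the $q_{_{i}}$ output vertices of $\matr{G}_{_{\varphi_{_{i}},n}}$, in order, onto the input vertices of $\matr{G}_{_{\psi,n}}$ carrying indices $q_{_{1}}+\cdots+q_{_{i-1}},\dots,q_{_{1}}+\cdots+q_{_{i}}-1$ (this is where the hypothesis $\sum_{i}q_{_{i}}=q$ enters), and let $Y$ be the output set of $\matr{G}_{_{\psi,n}}$. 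Because no two distinct vertices of a single one of the $k+1$ graphs are ever identified, each piece embeds as a subgraph of $\matr{G}_{_{\theta,n}}$, and every edge of $\matr{G}_{_{\theta,n}}$ is the image of an edge of some piece. The whole argument then rests on one elementary remark: a map $\sigma:V(\matr{G}_{_{\theta,n}})\to C$ is a proper coloring iff its restriction to each piece is proper, and conversely proper colorings of the pieces that agree on all merged vertices paste together to a unique proper coloring of $\matr{G}_{_{\theta,n}}$.

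Granting this, clauses (i) and (ii) are immediate. For (ii), $R$ already spans a clique $K_{n}$ inside the copy of $\matr{G}_{_{\psi,n}}$, so its vertices take $n$ distinct colors in every $n$-coloring, and the normalization $\sigma(\Kv{i})=i$ may be imposed uniformly on all pieces at once precisely because the reference sets were merged. For (i), I would take any $\mathbf{s}=(s_{_{0}},\dots,s_{_{p-1}})$ in the domain of $\theta$ — recall that, by definition of composition, $\mathbf{s}\in\mathrm{dom}(\theta)$ iff $\mathbf{s}\in\mathrm{dom}(\varphi_{_{i}})$ for each $i$ and $\mathbf{t}:=(\varphi_{_{1}}(\mathbf{s}),\dots,\varphi_{_{k}}(\mathbf{s}))\in\mathrm{dom}(\psi)$ — color $x_{_{j}}$ by $s_{_{j}}$ and $\Kv{i}$ by $i$ in each $\matr{G}_{_{\varphi_{_{i}},n}}$ and use clause (iii) for $\varphi_{_{i}}$ to extend to a proper coloring whose outputs read $\varphi_{_{i}}(\mathbf{s})$, then color the inputs of $\matr{G}_{_{\psi,n}}$ by $\mathbf{t}$ and $\Kv{i}$ by $i$ and use clause (iii) for $\psi$ (legitimate since $\mathbf{t}\in\mathrm{dom}(\psi)$) to obtain a proper coloring of $\matr{G}_{_{\psi,n}}$; these $k+1$ colorings agree on $R$, on $X$, and on each output-of-$\varphi_{_{i}}$/input-of-$\psi$ overlap, hence paste to a proper $n$-coloring of $\matr{G}_{_{\theta,n}}$.

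The heart of the matter is clause (iii), which I would establish in both directions by restriction and pasting. Fix $\sigma_{_{0}}:X\cup R\to C$ with $\sigma_{_{0}}(\Kv{i})=i$ and put $\mathbf{s}=(\sigma_{_{0}}(x_{_{0}}),\dots,\sigma_{_{0}}(x_{_{p-1}}))$. If $\sigma$ is a proper $n$-coloring of $\matr{G}_{_{\theta,n}}$ extending $\sigma_{_{0}}$, then its restriction to each $\matr{G}_{_{\varphi_{_{i}},n}}$ is a proper coloring extending $\sigma_{_{0}}$, so clause (iii) for $\varphi_{_{i}}$ forces $\mathbf{s}\in\mathrm{dom}(\varphi_{_{i}})$, colors the outputs of $\matr{G}_{_{\varphi_{_{i}},n}}$ by $\varphi_{_{i}}(\mathbf{s})$, and pins that restriction down uniquely; since those outputs are the inputs of $\matr{G}_{_{\psi,n}}$, the restriction of $\sigma$ to $\matr{G}_{_{\psi,n}}$ extends the assignment that sends the $\psi$-inputs to $\mathbf{t}:=(\varphi_{_{1}}(\mathbf{s}),\dots,\varphi_{_{k}}(\mathbf{s}))$, whence clause (iii) for $\psi$ forces $\mathbf{t}\in\mathrm{dom}(\psi)$ — that is, $\mathbf{s}\in\mathrm{dom}(\theta)$ — colors $Y$ by $\psi(\mathbf{t})=\theta(\mathbf{s})$, and fixes that restriction too. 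As the pieces cover $V(\matr{G}_{_{\theta,n}})$, $\sigma$ is then determined everywhere, giving uniqueness; and conversely, if $\mathbf{s}\in\mathrm{dom}(\theta)$, the construction used for clause (i) with this particular $\mathbf{s}$ produces an extension of $\sigma_{_{0}}$. The only points needing care are the index bookkeeping in step (c), so that $\mathbf{t}$ is assembled in the correct coordinate order, together with the disjointness convention noted at the outset; there is no genuine obstacle, which is why the lemma follows easily from the definitions.
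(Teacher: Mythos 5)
Your construction is exactly the one the paper has in mind: the paper gives no proof of this lemma (it is stated as following "easily from the definitions"), but the introduction describes precisely your amalgam — identify the reference sets, identify the outputs of the $\matr{G}_{_{\varphi_{_{i}},n}}$ with the inputs of $\matr{G}_{_{\psi,n}}$ — and your verification of the three clauses by restriction and pasting is correct. Your writeup is a faithful and complete elaboration of what the paper leaves implicit.
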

It is worth noting that if a graph $\matr{G}[x,y]$ simulates a one-to-one 
partial function $\psi$, then the same graph with the input and the output swapped, $\matr{G}[y,x]$, simulates the inverse function $\psi^{-1}$.
This is true even if $\psi$ is not onto, in which case $\psi^{-1}$
is partially defined.
Trivially, the identity, constant and projection functions are simulatable,
and consequently,
by the above lemma any feed-forward circuit (network) of simulatable partial functions is again simulatable.\footnote{Although this is essentially all we need in this article to prove our main result, the above setup can be extended to more complicated combinations as {\it recursion} in classical recursion theory and theory of computation.  We do not discuss the general setup in this article (e.g. see \cite{DAN01} for an example of this). Also, all the results of this article can be generalized to the case of effective simulation of {\it relations} with appropriate changes made in definitions and sequential coloring algorithms.  However we do not delve into the details of these generalizations since it will not give rise to stronger results as far as the main result of this article is concerned.}
\subsection{Graph amalgams}
In what follows we recall a more or less standard notation for
amalgams which is adopted from \cite{DHT?}
(the interested reader may also consult \cite{DE08,DHT?,ROGR97,EEKR99-1,EEKR99-2} for more
on this as well as the Appendix of this article).\\
Let $X=\{x_{_{1}},x_{_{2}},\ldots,x_{_{k}}\}$ and $\matr{G}$ be a
set and a graph, respectively, and consider a one-to-one map
$\varrho: X \longrightarrow V(\matr{G})$. Evidently, one can
consider $\varrho$ as a graph monomorphism from the empty graph
$\matr{X}$ on the vertex set $X$ to the graph $\matr{G}$.
We interpret it as a {\it labeling} of some
vertices of $\matr{G}$ by the elements of $X$. The data introduced
by $(X,\matr{G},\varrho)$ is called a {\it marked graph} $\matr{G}$
marked by the set $X$ through the map $\varrho$. Note that (by abuse
of language) we may introduce the corresponding marked graph as
$\matr{G}[x_{_{1}},x_{_{2}},\ldots,x_{_{k}}]$ when the definition of
$\varrho$ (especially its range) is clear from the context. Also,
 we may refer to {\it the vertex $x_{_{i}}$}
as the vertex $\varrho(x_{_{i}}) \in V(\matr{G})$. This is most
natural when $X \subseteq V(G)$ and vertices in $X$ are marked by
the corresponding elements in
$V(G)$ through the identity mapping.
We use the following notation
$$\matr{G}[x_{_{1}},x_{_{2}},\ldots,x_{_{k}}]+\matr{H}[y_{_{1}},y_{_{2}},\ldots,y_{_{l}}]$$
for the graph (informally) constructed by taking the disjoint union
of the marked graphs $\matr{G}[x_{_{1}},x_{_{2}},\ldots,x_{_{k}}]$
and $\matr{H}[y_{_{1}},y_{_{2}},\ldots,y_{_{l}}]$ and then {\it
identifying} the vertices with the same mark (for a formal and
precise definition see the Appendix). It is understood that the
repeated appearances of a graph structure in an expression such as
$\matr{G}[v]+\matr{G}[v,w]$ are always made of disjoint copies of
that structure with the indicated labels marked and identified properly.
For example, $\matr{G}[v]+\matr{G}[v,w]$ is an amalgam constructed
by two disjoint isomorphic copies of $\matr{G}$ identified on the
vertex $v$ where the vertex $w$ in one of these copies is marked.
In the sequel, we may also use a semicolon to emphasize or separate two lists of vertices in a marked graph, as $\matr{G}[x,y;z]$.
Also,  we may use a bold vector notation (or a set of vertices when it causes no confusion) to refer to a list of marked vertices as $\matr{G}[\vect{v}] \isdef \matr{G}[v_{_{1}},v_{_{2}},\ldots,v_{_{k}}]$. \\
By $\matr{K}_{_{k}}[v_{_{1}},v_{_{2}},\ldots,v_{_{k}}]$ we mean a
$k$-clique on $\{v_{_{1}},v_{_{2}},\ldots,v_{_{k}}\}$ marked by its
own set of vertices. A single edge is denoted by
$\eg[v_{_{1}},v_{_{2}}]$ (i.e.,
$\eg[v_{_{1}},v_{_{2}}] =
\matr{K}_{_{2}}[v_{_{1}},v_{_{2}}]$). As one more simple example
note that
$$\matr{P}_{_{2}}[v_{_{1}},v_{_{2}},v_{_{3}}] \isdef \eg[v_{_{1}},v_{_{2}}]+\eg[v_{_{2}},v_{_{3}}]$$
is a path of length $2$ on the vertex set
$\{v_{_{1}},v_{_{2}},v_{_{3}}\}$.\\ \ \\
\section{A simulation lemma for invertible functions}\label{sec:invertiblefuncs}
Our main objective in this section is to prove that any invertible function as $\pi:S\rightarrow S$ can be simulated through graph colorings. For this we need the following basic lemma.
\begin{lem}\label{lem:Lfunction}
Given $n \geq 3$, for any fixed $k \in C$,  there  exists
 a graph $\Lkn$ that satisfies the following properties,
\begin{itemize}
\item[{\rm i)}]{Any partial proper $n$-coloring $\sigma$ for which $\sigma(x)=k$ or $\sigma(y)=k$ uniquely extends to an $n$-coloring of
 $\Lkn$. $($This also implies that in any $n$-coloring of $\Lkn$, the vertex $x$ takes the color
$k$ if and only if the vertex $y$ takes the color $k$.$)$}
\item[{\rm ii)}]{Any assignment of colors from $C_{_{\{k\}}}$ to
vertices $x$ and $y$ has a unique extension to an $n$-coloring of
the whole graph $\Lkn$.}
\item[{\rm iii)}]{For the graph $\Lkn$ the number of vertices is equal to $4n-3$ and the number of edges is equal to $\frac{n(7n-11)}{2}$. }
\end{itemize}
\end{lem}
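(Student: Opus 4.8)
The plan is to build $\Lkn$ explicitly as an amalgam over a single copy of the reference clique $\matr{K}_{_{n}}[R]$, using as the basic building block a \emph{binary selector}: a vertex $u$ joined to the sub-clique on $R\setminus\{\Kv{k},\Kv{i}\}$. Since in the reference coloring $\Kv{j}\mapsto j$ that sub-clique carries exactly the colors $C\setminus\{k,i\}$, such a $u$ is forced, in every $n$-coloring that restricts to the reference coloring on $R$, to take one of the two colors $k$ or $i$; it acts as a switch between the states ``$k$'' and ``$i$''. For each $i\in C_{_{\{k\}}}$ I would install (at least) one such selector $u_{_{i}}$ and join it to both $x$ and $y$. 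Reading the color $k$ as the bit ``on'', this network transports the bit between $x$ and $y$: if $\sigma(x)=k$ then every $u_{_{i}}$ is forced off its $k$-value, so $\sigma(u_{_{i}})=i$; the $n-1$ vertices $u_{_{i}}$ now carry between them all the colors of $C_{_{\{k\}}}$ and are all adjacent to $y$, so they force $\sigma(y)=k$, and symmetrically $\sigma(y)=k$ forces $\sigma(x)=k$. This yields the core of~(i) and the parenthetical equivalence.

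The real difficulty is uniqueness. When neither $x$ nor $y$ gets the color $k$, say $\sigma(x)=a$ and $\sigma(y)=b$ with $a,b\in C_{_{\{k\}}}$, the selectors $u_{_{a}},u_{_{b}}$ are pinned to $k$ (since $u_{_{a}}\ne a$ and $u_{_{b}}\ne b$), but a selector $u_{_{i}}$ with $i\notin\{a,b\}$ still has both of its colors available, so the bare network does not satisfy~(ii). I therefore expect $\Lkn$ to carry an additional auxiliary sub-structure — a further $O(n)$ vertices, which is what pushes the total to $4n-3$ — whose job is precisely to pin these ``idle'' selectors to a definite color. \emph{This is the main obstacle:} the reinforcement must make the idle selectors rigid while leaving $x$ and $y$ entirely free, so that $(x,y)$ can still be colored with every pair in $C_{_{\{k\}}}^{2}$ and with $(k,k)$. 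Crude remedies — making all the selectors pairwise adjacent, or pinning a selector to a fixed color by joining it to a sub-$(n-1)$-clique of $R$ — are ruled out, since the first would forbid some color pair at $x,y$ and the second would force one of $x,y$ to take the color $k$. Finding a wiring of the auxiliary vertices that pins every idle selector with no such side effect, and verifying that it does, is the technical heart of the proof.

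Once $\Lkn$ is fixed, the three assertions follow from color-forcing (propagation) arguments. For~(i), assuming $\sigma(x)=k$ (the case $\sigma(y)=k$ being symmetric): the $u_{_{i}}$ are forced as above, they force $\sigma(y)=k$, and then the auxiliary vertices receive their colors one after another, at every step with exactly one color left available, so the extension both exists and is unique. For~(ii), with $\sigma(x)=a$ and $\sigma(y)=b$ in $C_{_{\{k\}}}$: the selectors indexed by $a$ and $b$ are pinned to $k$, every other selector is pinned by the auxiliary structure, all auxiliary vertices are then determined, and one checks that no edge becomes monochromatic — in particular each selector's edges to its $(n-2)$-clique cause no trouble because neither of its two admissible colors lies in $C\setminus\{k,i\}$. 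For~(iii) one simply counts: $n$ vertices in $R$, the two vertices $x$ and $y$, and the $3n-5$ auxiliary vertices give $4n-3$ in all; and $\binom{n}{2}$ edges inside $R$, plus $n-2$ edges from each selector to its sub-clique, plus the selector-to-$x$, selector-to-$y$ and selector-to-selector edges, sum to $\frac{n(7n-11)}{2}$ — a routine bookkeeping check.
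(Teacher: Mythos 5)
Your proposal correctly reconstructs the $\eta$-part of the paper's gadget (the $n-1$ binary selectors $u_{_{i}}$, each joined to $x$, to $y$, and to the sub-clique on $R_{_{\{\Kv{k},\Kv{i}\}}}$ so that it can only take the colors $k$ or $i$), and it correctly derives part~(i) from it. But you then explicitly stop at what you yourself call ``the technical heart of the proof'': you assert that some auxiliary structure must pin the idle selectors when $x$ and $y$ both avoid the color $k$, you rule out two crude candidates, and you never exhibit a structure that works. That is a genuine gap --- the lemma is an existence statement, and the object whose existence is claimed has not been constructed.

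For the record, the paper's resolution is a \emph{propagation} gadget rather than a per-selector pinning gadget. For each edge $u_{_{i}}u_{_{j}}$ of an arbitrary tree on the $n-1$ selectors, one inserts two new vertices $v$ and $w$, each adjacent to both $u_{_{i}}$ and $u_{_{j}}$ (but not to each other, nor to $x$ or $y$), with $v$ joined to $R_{_{\{\Kv{k},\Kv{i}\}}}$ and $w$ joined to $R_{_{\{\Kv{k},\Kv{j}\}}}$, so that $v\in\{k,i\}$ and $w\in\{k,j\}$. If $\sigma(u_{_{i}})=k$ then $v=i$ and $w=j$, whence $u_{_{j}}=k$; by connectivity of the tree, either \emph{all} selectors take the color $k$ or \emph{none} does. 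In case~(ii), with $\sigma(x)=a\neq k$, the selector $u_{_{a}}$ is forced to $k$ and the propagation then pins every idle selector to $k$ as well (not, as you guessed, to some other ``definite color''), after which each $v,w$ is forced to its non-$k$ value; in case~(i) all selectors take their index colors and each $v,w$ is forced to $k$, with no conflict since $v$ and $w$ are nonadjacent. The counts $n+2+(n-1)+2(n-2)=4n-3$ vertices and $\binom{n}{2}+n(n-1)+2n(n-2)=\frac{n(7n-11)}{2}$ edges then match the claim; your guess of $3n-5$ auxiliary vertices happens to agree with $(n-1)+2(n-2)$, but without the construction the bookkeeping cannot be checked.
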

\begin{proof}{
We define the graphs $\eta_{_{k}}[x,\vect{u}_{_{\{k\}}},y;R]$ and
 $\zeta_{_{k}}(i,j)[a,b;R]$ (depicted in Figure~\ref{fig:gzt}$(a,b)$)
by
$$\eta_{_{k}}[x,\vect{u}_{_{\{k\}}},y;R] \isdef
     \sum_{i \in C_{_{\{k\}}}}\matr{P}_{_{2}}[x,u_{_{i}},y] + \sum_{i \in{C_{_{\{k\}}} }}~
     \sum_{  z \in R_{_{\{\Kv{k},\Kv{i}\}}}}\eg[u_{_{i}},z]+\matr{K}_{_{n}}[R]$$
 and
 $$\zeta_{_{k}}(i,j)[a,b;R] \isdef
     \matr{P}_{_2}[a,v,b]+\matr{P}_{_2}[a,w,b]+\sum_{z \in R_{_{\{\Kv{k},\Kv{i}\}}}}\eg[v,z] +
     \sum_{z \in R_{_{\{\Kv{k},\Kv{j}\}}}}\eg[w,z]+\matr{K}_{_{n}}[R].$$
Also, let $\matr{T}[\vect{u}_{_{\{k\}}}]$ be an arbitrary tree on
$n-1$ vertices marked by the elements of $\vect{u}_{_{\{k\}}}$ (see
Figure~\ref{fig:gzt}$(c)$). Define
$\matr{T}_{_{\zeta}}[\vect{u}_{_{\{k\}}};R]$ to be the graph
constructed on $\matr{T}[\vect{u}_{_{\{k\}}}]$ by substituting each
edge $\eg[u_{_{i}},u_{_{j}}]$ by the structure
$\zeta_{_{k}}(i,j)[u_{_{i}},u_{_{j}};R]$; that is,
$$\matr{T}_{_{\zeta}}[\vect{u}_{_{\{k\}}};R] \isdef \displaystyle{\sum_{u_{_{i}}u_{_{j}} \in E(\matr{T}[\vect{u}_{_{\{k\}}}])}} \zeta_{_{k}}(i,j)[u_{_{i}},u_{_{j}};R].$$
Now, define
$$\Lkn \isdef
\eta_{_{k}}[x,\vect{u}_{_{\{k\}}},y;R]+\matr{T}_{_{\zeta}}[\vect{u}_{_{\{k\}}};R]. $$
To prove (i) note that if for a partial $C$-coloring $\sigma$ of $\Lkn$ we have $\sigma(x)=k$,
then just by restricting ourselves to $\eta_{_{k}}[x,\vect{u}_{_{\{k\}}},y]$ we have $\sigma(u_{_{i}})=i$ for all $i \in C_{\{k\}}$, and consequently,
$\sigma(y)=k$. Now, it is easy to see that this partial coloring uniquely extends to a $C$-coloring of $\Lkn$. The rest of the proof follows by symmetry.\\
To prove (ii), first we prove the following claim.
\begin{itemize}
\item[]{{\bf claim}: {\it In any $C$-coloring $\sigma$ of $\Lkn$, if there is an index $i \in C$ such that $\sigma(u_{_{i}})=k$ then we have
$$\forall\ j \in C_{_{\{k\}}} \quad \sigma(u_{_{j}})=k.$$}
It is easy to see that if $u_{_{l}}u_{_{j}} \in E(\matr{T}[\vect{u}_{_{\{k\}}}])$ and $\sigma(u_{_{l}})=k$, then by the structure of
$\zeta_{_{k}}(l,j)[u_{_{l}},u_{_{j}}]$ we have $\sigma(u_{_{j}})=k$. Then, the claim follows from the connectedness of the tree $\matr{T}[\vect{u}_{_{\{k\}}}]$ and the fact that
for any $j \not = i$ there is a path in this tree starting from the vertex $u_{_{i}}$ and ending at the vertex $u_{_{j}}$.
}
\end{itemize}
Now, to prove (ii), let  the vertices  $x$ and $y$ be given
arbitrarily colors $i$ and $j$ in $C_{_{\{k\}}}$, respectively.
Then, by the structure of $\eta_{_{k}}[x,\vect{u}_{_{\{k\}}},y]$ it is clear that the vertex $u_{_{i}}$ is forced to take the color $k$,
and consequently, by the previously proved claim all vertices $u_{_{l}}$ are forced to take the color $k$. This again fixes the color of the rest
of the vertices appearing in the structures $\zeta_{_{k}}(l_{_{1}},l_{_{2}})[u_{_{l_{_{1}}}},u_{_{l_{_{2}}}}]$.\\
Part (iii) is straight forward and can be verified using the definition of the graph $\Lkn$.
 }\end{proof}
\begin{figure}[!htbp]
      \begin{center}
         $$\diagEta$$
         (a) $\eta_{_{k}}[x,\vect{u}_{_{\{k\}}},y;R]$
      \end{center}
   \begin{minipage}[b]{.49\textwidth}
      \begin{center}
        $$\diagzeta$$
        (b) $\zeta_{_{k}}(i,j)[a,b;R]$
      \end{center}
   \end{minipage}
   \begin{minipage}[b]{.49\textwidth}
      \begin{center}
        $$\diagtree$$
        (c) $\matr{T}_{_{{\zeta}}}[\vect{u}_{_{\{k\}}};R]$
      \end{center}
   \end{minipage} \\
   \caption{Components of $\Lkn$}
   \label{fig:gzt}
\end{figure}
\begin{figure}
      \begin{center}
        $$\diagfanar$$
   \caption{$\Lkn$}
    \label{fig:diagfanar}
     \end{center}
\end{figure}
\begin{pro}\label{pro:permutations}
Given integers $ m \geq 2$ and $n \geq \max\{m,3\}$, along with a set
$$S = \{0,1,\ldots,m-1\}$$
and an invertible function $\pi:S\rightarrow S$ that can be presented using $\Theta$ transpositions, then there is a graph
$\matr{G}_{_{\pi,n}}[x;y]$ that simulates $\pi$ through
$n$-colorings. Moreover, $|V(\matr{G}_{_{\pi,n}})| \simeq O(\Theta n^2)$ and $|E(\matr{G}_{_{\pi,n}})|\simeq O(\Theta n^3)$.
\end{pro}
\begin{figure}[ht]
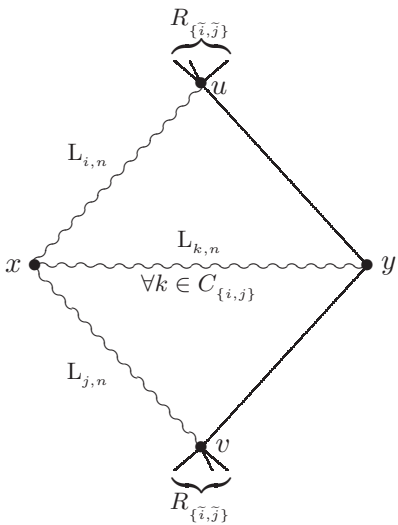

   \begin{center}
      $$\diagT$$
   \end{center}
   \caption{$\matr{G}_{_{\tau(i,j),n}}[x;y;R]$}
   \label{fig:T}
\end{figure}
\begin{proof}{ Let $\tau(i,j)$ be a transposition on $S$. We construct the graph
$\matr{G}_{_{\tau(i,j),n}}[x;y;R]$  as follows (see Figure~\ref{fig:T}):
\begin{equation}\label{eq:transposition}
\begin{array}{ll}
\matr{G}_{_{\tau(i,j),n}}[x;y;R]& \isdef
\displaystyle \sum_{k \in \matr{C}_{_{\{i,j\}}}} \Lkn
+\sum_{z\in{R_{_{\{\Kv{i},\Kv{j}\}}}}}\eg[u,z]\\
&\\
&+\displaystyle \sum_{z\in{R_{_{\{\Kv{i},\Kv{j}\}}}}}\eg[v,z]+\sum_{m \leq k < n}\eg[x,\Kv{k}]+\sum_{m \leq k < n}\eg[y,\Kv{k}]\\
&\\
&+\matr{L}_{_{i,n}}[x,u;R]+\matr{L}_{_{j,n}}[x,v;R]
+\eg[u,y]+\eg[v,y].
\end{array}
\end{equation}
We show that the graph $\matr{G}_{_{\tau(i,j),n}}[x;y]$
simulates the transposition $\tau(i,j)$ through
$n$-colorings. For this we consider the following two cases.
\begin{itemize}
\item{
If the vertex $x$ takes the color $i\in S$, then by
Lemma~\ref{lem:Lfunction}(i) the vertex $u$ is also forced to
take the colored $i$. Also, since $x$  is connected to $v$ by the
structure $\matr{L}_{_{j,n}}[x,v]$, the vertex $v$ can not  be colored by the color $j$.\\
On the other hand, note that since $x$  is connected to $y$ by the structures
$\matr{L}_{_{k,n}}[x,y]$ for all $k \in C_{_{\{i,j\}}}$, the vertex
$y$ can only take the color $j$.
The same proof goes through by symmetry when $x$ takes the color $j$.
}
\item{
If the vertex $x$ takes a color $k \in S_{_{\{i,j\}}}$, then, the vertex $y$ is forced to take the color $k$,
since $x$ is connected to $y$ through the structure $\matr{L}_{_{k,n}}[x,y]$.
 Consequently, the vertices $u$ and $v$ take the colors $j$ and $i$, respectively.
}
\end{itemize}
The number of vertices and edges can be verified directly.
Now, for a given permutation $\pi$ that can be presented by $\Theta$ transpositions, the proposition follows from Lemma~\ref{lem:combination}.
 }\end{proof}
As a corollary, let us   define the functions $\Cs^{+}_{_{m}}$ and $\Cs^{-}_{_{m}}$ on $S$ as
$$\Cs^{+}_{_{m}}(i) \isdef i+1 \pmod{m}\quad\text{and}\quad \Cs^{-}_{_{m}}(i) \isdef i-1 \pmod{m}.$$
Then, by Proposition~\ref{pro:permutations}
can be simulated by graph colorings. Also, we have
$$|V(\matr{G}_{_{\Cs^{+}_{_{m}},n}})| \simeq |V(\matr{G}_{_{\Cs^{-}_{_{m}},n}})| \simeq O(n^3),$$
and
$$|E(\matr{G}_{_{\Cs^{+}_{_{m}},n}})| \simeq |E(\matr{G}_{_{\Cs^{-}_{_{m}},n}})| \simeq O(n^4).$$
We will use these functions in our forthcoming constructions.
\section{A couple of gadgets}\label{sec:gadgets}
\subsection{Some simple gadgets }
In this section we prove that a couple of basic gadgets can be simulated through graph colorings. These gadgets will be used in our
final construction.
\begin{defin}{
For any $m \geq 2$, we define the following functions.
$$\begin{array}{ll}
\Ch_{_{m}} : S \rightarrow \{0,1\}^{m}\;, &\quad    \Ch_{_{m}}(i) \isdef \vect{1}_{_{m}}^{^i} \;.\\
&\\
\Ps_{_{m}} : \{0,1\}^{m} \rightarrow S\;, &\quad  \Ps_{_{m}}(\vect{u}) \isdef \left \{ \begin{array}{ll}
                                                                  i  & \text{if $\vect{u}=\vect{1}_{_{m}}^{^i}$,}  \\
                                                                  {\rm undefined} & \text{otherwise}. \end{array}\right.\\
&\\
\Ex_{_{m}} : S \rightarrow \{0,1\}^{m}\;,&\quad  \Ex_{_{m}}(i)\isdef \left \{ \begin{array}{ll}
                                                                    \vect{0}_{_{m}} &  \text{if $i=0$,} \\
                                                                    \vect{1}_{_{m}} &  \text{if $i=1$,}\\
                                                                    {\rm undefined} & \text{otherwise.} \end{array}\right.
\end{array}
$$
Note that $\Ps_{_{m}}$ and $\Ch_{_{m}}$ are inverse to each other and $\Ps_{_{m}} \circ \Ch_{_{m}}=\Id_{_{m}}$.
 }\end{defin}

\begin{lem}
For any pair of integers $ m \geq 2$ and $n \geq \max\{m,3\}$,
\begin{itemize}
\item[{\rm i)}]{There are graphs $\matr{G}_{_{\Ch_{_{m}},n}}$ and $\matr{G}_{_{\Ps_{_{m}},n}}$ that simulate the functions
$\Ch_{_{m}}$ and $\Ps_{_{m}}$ through $n$-colorings, respectively.}
\item[{\rm ii)}]{There exist a graph $\matr{G}_{_{\Ex_{_{m}},n}}$ that simulates $\Ex_{_{m}}$ through  $n$-colorings.}
\item[{\rm iii)}]{We have,
$$\begin{array}{llll}
|V(\matr{G}_{_{\Ch_{_{m}},n}})| &\simeq |V(\matr{G}_{_{\Ps_{_{m}},n}})|& \simeq |V(\matr{G}_{_{\Ex_{_{m}},n}})|&  \simeq O(n^2), \\
|E(\matr{G}_{_{\Ch_{_{m}},n}})| &\simeq |E(\matr{G}_{_{\Ps_{_{m}},n}})|& \simeq |E(\matr{G}_{_{\Ex_{_{m}},n}})|&  \simeq O(n^3).
\end{array}
$$}
\end{itemize}
\end{lem}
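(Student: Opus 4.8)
The plan is to construct $\matr{G}_{_{\Ch_{_m},n}}$ explicitly, to obtain $\matr{G}_{_{\Ps_{_m},n}}$ from it at no cost, to build $\matr{G}_{_{\Ex_{_m},n}}$ by a short direct argument, and to read the size bounds off the vertex/edge counts for $\matr{L}_{_{k,n}}$ in Lemma~\ref{lem:Lfunction}(iii). Each of the three graphs will be an amalgam of $O(m)$ copies of $\matr{L}_{_{k,n}}$-type gadgets glued along the common reference clique $\matr{K}_{_n}[R]$, together with a handful of edges from designated vertices to the reference vertices $\Kv{k}$; the latter both confine certain colours to small sets and forbid extensions when a vertex receives a colour outside the domain in question.

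\textbf{The gadget for $\Ch_{_m}$.} Introduce fresh vertices $w_{_0},\dots,w_{_{m-1}}$ and $u_{_0},\dots,u_{_{m-1}}$, with $u_{_j}$ playing the role of the output vertex $y_{_j}$. Attach a copy of $\matr{L}_{_{j,n}}[x,w_{_j};R]$ for each $0\le j<m$, so that $\sigma(w_{_j})=j$ iff $\sigma(x)=j$ by Lemma~\ref{lem:Lfunction}(i). Join $x$ to every $\Kv{k}$ with $m\le k<n$ (forcing $\sigma(x)\in S$); join $w_{_0}$ and every $u_{_j}$ to every $\Kv{k}$ with $k\notin\{0,1\}$, and join $w_{_j}$ (for $j\ge1$) to every $\Kv{k}$ with $k\notin\{0,j\}$, confining those colours to $\{0,1\}$ and to $\{0,j\}$ respectively; add the single edge $\eg[w_{_0},u_{_0}]$ and, for $1\le j<m$, a copy of $\matr{L}_{_{0,n}}[w_{_j},u_{_j};R]$; finally add $\matr{K}_{_n}[R]$. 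A case analysis on $i:=\sigma_{_0}(x)$ finishes the verification: if $i\ge m$ the edge $\eg[x,\Kv{i}]$ kills every extension, which is correct since $i\notin\operatorname{dom}\Ch_{_m}$; if $i\in S$ the confinements force $\sigma(w_{_j})=j$ precisely when $i=j$, and then the last layer ($\eg[w_{_0},u_{_0}]$ for $j=0$, and $\matr{L}_{_{0,n}}[w_{_j},u_{_j}]$ for $j\ge1$, each together with $\sigma(u_{_j})\in\{0,1\}$) yields $\sigma(u_{_j})=1$ exactly when $i=j$, i.e. $(\sigma(u_{_0}),\dots,\sigma(u_{_{m-1}}))=\vect{1}_{_m}^{^i}=\Ch_{_m}(i)$; moreover Lemma~\ref{lem:Lfunction}(i,ii) forces every internal vertex, so the extension is unique. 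This is property~(iii) of Definition~\ref{defin:functionsim}, while (i) ($n$-colourability, realised e.g. by colouring $x$ with $0$, plus the containment $X\cup Y\cup R\subseteq V$) and (ii) (the clique on $R$) are immediate.

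\textbf{The gadgets for $\Ps_{_m}$ and $\Ex_{_m}$; sizes.} Since $\Ch_{_m}$ is injective and $\Ps_{_m}$ is exactly its (partially defined) inverse, $\matr{G}_{_{\Ch_{_m},n}}$ with its input and output vertices interchanged simulates $\Ps_{_m}$; this is the remark following Lemma~\ref{lem:combination}, applied to tuples of marked vertices rather than single ones, which is legitimate because the colourings of $\{x,u_{_0},\dots,u_{_{m-1}}\}$ realized by proper $n$-colourings of $\matr{G}_{_{\Ch_{_m},n}}$ are exactly the pairs $(i,\Ch_{_m}(i))$, $i\in S$. For $\Ex_{_m}$: take output vertices $y_{_0},\dots,y_{_{m-1}}$, join $x$ and every $y_{_j}$ to each $\Kv{k}$ with $2\le k<n$ (confining all of them to $\{0,1\}$ and forbidding extensions when $\sigma_{_0}(x)\notin\{0,1\}=\operatorname{dom}\Ex_{_m}$), attach $\matr{L}_{_{0,n}}[x,y_{_j};R]$ for each $j$, and add $\matr{K}_{_n}[R]$; when $\sigma_{_0}(x)=i\in\{0,1\}$, Lemma~\ref{lem:Lfunction}(i) together with $\sigma(y_{_j})\in\{0,1\}$ forces $\sigma(y_{_j})=i$ uniquely for all $j$, so the output is $i\,\vect{1}_{_m}=\Ex_{_m}(i)$. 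Finally, each of the three graphs is built from $O(m)=O(n)$ copies of $\matr{L}_{_{k,n}}$-type gadgets, each having $4n-3$ vertices and $\tfrac{n(7n-11)}{2}$ edges by Lemma~\ref{lem:Lfunction}(iii) with the clique $\matr{K}_{_n}[R]$ shared, plus $O(mn)=O(n^2)$ further edges, which gives $O(n^2)$ vertices and $O(n^3)$ edges in every case.

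\textbf{Main obstacle.} No individual step is hard, but two points need care. First, the colour-confinement bookkeeping: one must check that each $w_{_j}$ is pinned to exactly one of the two colours $\{0,j\}$ (and each output to $\{0,1\}$) and that the forced extension really is unique, not merely possible. Second — and this is the point most easily overlooked — one must explicitly force $\sigma(x)\in S$ in $\matr{G}_{_{\Ch_{_m},n}}$ via the edges $\eg[x,\Kv{k}]$, $m\le k<n$: without them a colour $i\ge m$ on $x$ would propagate through the gadget and make it ``evaluate'' $\Ch_{_m}$ spuriously to $\vect{0}_{_m}$, violating Definition~\ref{defin:functionsim}(iii). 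The mild asymmetry between $w_{_0}$ (a bare edge to $u_{_0}$) and $w_{_j}$, $j\ge1$ (an $\matr{L}_{_{0,n}}$-layer to $u_{_j}$) is the device that turns the detected value $j$ into the Boolean $1$; one could unify the two cases through a transposition, but that would push the size past $O(n^2)$ vertices.
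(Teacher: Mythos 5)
Your construction of $\matr{G}_{\Ch_m,n}$ (and of $\matr{G}_{\Ps_m,n}$ by swapping input and output) coincides with the paper's own, down to the choice of the confinement edges to the reference vertices, and your verification and $O(n^2)/O(n^3)$ size count follow the same lines. The only, immaterial, difference is in the $\Ex_m$ gadget: the paper attaches both $\matr{L}_{0,n}[x,v_j;R]$ and $\matr{L}_{1,n}[x,v_j;R]$ and needs no edges from $x$ to the $\Kv{k}$, whereas you use only $\matr{L}_{0,n}$ together with the edges $\eg[x,\Kv{k}]$, $2\le k<n$; both variants are correct.
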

\begin{figure}[t]
      \begin{center}
        $$\diagchar$$
        \caption{$\matr{G}'_{_{\Ch_{_{m}},n}}[x;\vect{u};R]$}
        \label{fig:diagchar}
      \end{center}
\end{figure}
\begin{proof}{Firstly, we prove (i). For this let $\vect{u}=(u_{_{0}},u_{_{1}},\ldots,u_{_{m-1}})$, and define the
graph $\matr{G}_{_{\Ch_{_{m}},n}}[x;\vect{u};R]$ by (see Figure~\ref{fig:diagchar})
$$\begin{array}{ll}
\matr{G}'_{_{\Ch_{_{m}},n}}[x;\vect{u};R] &\isdef
 \eg[w_{_{0}},u_{_{0}}]+\displaystyle \sum_{i=0}^{m-1}\matr{L}_{_{i,n}}[x,w_{_{i}};R]+
  \sum_{i=1}^{m-1}\matr{L}_{_{0,n}}[w_{_{i}},u_{_{i}};R]
  \\ \\
&
 \ \ +\displaystyle \sum_{i=2}^{n-1}\eg[w_{_{0}},\Kv{i}]+
 \sum_{j=1}^{m-1}\sum_{i \in C_{_{\{0,j\}}}}\eg[w_{_{j}},\Kv{i}]+
 \sum_{j=0}^{m-1}\sum_{i=2}^{n-1}\eg[u_{_{j}},\Kv{i}] \;.
 \end{array}$$
Suppose that $x$ is assigned a color $k$, where $0\leq k<m$,
and each $\widetilde{i}\in R$ is assigned the color $i$.
Then, it follows from the property of the graph $\matr{L}$
(Lemma~\ref{lem:Lfunction}) that this assignment can be extended
to a proper $n$-coloring in which $w_{_k}$ takes the color $k$,
$u_{_k}$ the color $1$, and all the vertices $w_{_j}$ and $u_{_j}$
for $j\neq k$ the color $0$, and furthermore, this extension is unique.
Therefore, the graphs
$$\matr{G}_{_{\Ch_{_{m}},n}}[x;\vect{u};R] \isdef
\matr{G}'_{_{\Ch_{_{m}},n}}[x;\vect{u};R] +
\sum_{i=m}^{n-1} \eg[x,\Kv{i}]$$
and
$$\matr{G}_{_{\Ps_{_{m}},n}}[\vect{u};x;R] \isdef
\matr{G}'_{_{\Ch_{_{m}},n}}[x;\vect{u};R] +
\sum_{i=m}^{n-1} \eg[x,\Kv{i}]$$
simulate $\Ch_{_{m}}$ and $\Ps_{_{m}}$, respectively.\\
Secondly, we prove (ii). For this we define
$$
\begin{array}{ll}
\matr{G}_{_{\Ex_{_{m}},n}}[x;\vect{v};R]& \isdef \displaystyle{\sum_{i=0}^{m-1}}(\matr{L}_{_{0,n}}[x,v_{_{i}};R]+\matr{L}_{_{1,n}}[x,v_{_{i}};R]) \\
&\\
&\ + \displaystyle{\sum_{j=0}^{m-1}\sum_{i=2}^{n-1}}\ \eg[v_{_{j}},\Kv{i}].
\end{array}
$$
Similar to the previous case, it is quite straight forward to verify that $\matr{G}_{_{\Ex_{_{m}},n}}[x;\vect{v};R]$ simulates the partial function $\Ex_{_{m}}$.
}\end{proof}
Next, we define the extended basic Boolean partial operations as follows.
\begin{defin}{
$$\begin{array}{ll}
\Oand_{_{m}}: S \times S \rightarrow \{0,1\}\;, &\quad  \Oand_{_{m}}(x,y) \isdef \left \{ \begin{array}{ll}
                                                                           x \wedge y &   \text{if $x,y\in\{0,1\}$,} \\
                                                                           {\rm undefined} & \text{otherwise.} \end{array}\right.\\
&\\
 \Oor_{_{m}}: S \times S \rightarrow \{0,1\}\;, &\quad  \Oor_{_{m}}(x,y) \isdef \left \{ \begin{array}{ll}
                                                                           x \vee y & \text{if $x,y\in\{0,1\}$,} \\
                                                                           {\rm undefined} & \text{otherwise.} \end{array}\right.\\
&\\
\Onot_{_{m}}: S \rightarrow \{0,1\}\;, &\quad  \Onot_{_{m}}(x) \isdef \left \{ \begin{array}{ll}
                                                                           \neg x & \text{if $x\in\{0,1\}$,} \\
                                                                           {\rm undefined} & \text{otherwise.} \end{array}\right.
\end{array}
$$
}\end{defin}
The space of {\it extended Boolean partial functions} consists of all functions that can be constructed using a finite combination of extended basic Boolean partial operations. The next lemma is an extension of a result of \cite{DHT?} for the extended Boolean partial functions.
\begin{figure}[t]
   \begin{center}
      $$\diagand$$
   \end{center}
   \caption{$\matr{G}_{_{\Oand_{_{m}},n}}[x,y;z;R]$}
   \label{fig:and}
\end{figure}
\begin{lem}\label{lem:boolean}
Any extended Boolean partial function  on $S=\{0,1,\ldots,m-1\}$ can be
simulated through $n$-colorings for any $n \geq \max\{m,3\}$. Moreover,
$$\begin{array}{lll}
|V(\matr{G}_{_{\Oand_{_{m}},n}})| &\simeq |V(\matr{G}_{_{\Oor_{_{m}},n}})|& \simeq O(n^2), \\
|E(\matr{G}_{_{\Oand_{_{m}},n}})| &\simeq |E(\matr{G}_{_{\Oor_{_{m}},n}})|& \simeq O(n^3),\\
|V(\matr{G}_{_{\Onot_{_{m}},n}})| & \simeq O(n),& \\
|E(\matr{G}_{_{\Onot_{_{m}},n}})| & \simeq O(n^2).&
\end{array}
$$
\end{lem}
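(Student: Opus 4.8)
The plan is to build the three primitive gadgets $\matr{G}_{_{\Onot_{_{m}},n}}$, $\matr{G}_{_{\Oand_{_{m}},n}}$, and $\matr{G}_{_{\Oor_{_{m}},n}}$ explicitly, verify each simulates the corresponding partial operation through $n$-colorings, and then invoke Lemma~\ref{lem:combination} (together with the already-noted simulatability of identity, constant, and projection functions) to conclude that every extended Boolean partial function---being a finite feed-forward combination of these primitives---is simulatable. The size bounds for a general such function then follow by additivity of vertex/edge counts over the (constant-size-per-gate) composition, so the real content is the construction and verification of the three atomic gadgets, whose sizes are to be read off directly.

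For $\matr{G}_{_{\Onot_{_{m}},n}}[x;z;R]$ the construction is lightest: attach $x$ and $z$ to the clique $\matr{K}_{_{n}}[R]$ so that both are forced into $\{\Kv{0},\Kv{1}\}$ by adding edges $\eg[x,\Kv{i}]$ and $\eg[z,\Kv{i}]$ for $2\le i<n$, and then force $z$ to be the ``other'' color of $\{0,1\}$ from $x$; the natural way is a $\matr{P}_{_2}$ through an auxiliary vertex pinned appropriately, or more simply one checks that with $x,z\in\{0,1\}$ a single constraint suffices. One must check: (a) in any $n$-coloring the $R$-vertices are forced to distinct colors (inherited from $\matr{K}_{_{n}}[R]$); (b) if $\sigma_{_0}(x)\in\{0,1\}$ the extension exists, is unique, and sends $z$ to $1-\sigma_{_0}(x)$; (c) if $\sigma_{_0}(x)\notin\{0,1\}$ no extension exists (blocked by the edges to $\Kv{i}$, $i\ge 2$). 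This gives $O(n)$ vertices and $O(n^{2})$ edges.

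For $\matr{G}_{_{\Oand_{_{m}},n}}[x,y;z;R]$ (Figure~\ref{fig:and}) the idea visible in the figure is to route $x$ and $y$ into an intermediate vertex $w$ that is constrained relative to $R_{_{\{\Kv{0},\Kv{1},\Kv{2}\}}}$, using $\matr{L}$-gadgets (Lemma~\ref{lem:Lfunction}) to propagate and ``copy'' colors, so that $w$ acts as a witness encoding the pair $(x,y)$ restricted to $\{0,1\}^2$, and then a further $\matr{L}_{_{1,n}}$-type link plus clique edges force $z$ to carry $x\wedge y$; edges $\eg[x,\Kv{i}],\eg[y,\Kv{i}],\eg[z,\Kv{i}]$ for $i\ge 2$ (and similar) kill the out-of-domain cases. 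The verification is a short case analysis over the four Boolean inputs plus the ``some input $\ge 2$'' case, each time citing Lemma~\ref{lem:Lfunction}(i)--(ii) for existence and uniqueness of the forced extension. The $\Oor$ gadget is obtained either by an entirely parallel construction or, more economically, via De~Morgan: $\Oor_{_{m}} = \Onot_{_{m}}\circ(\ldots)$ using $\Onot_{_{m}}$ and $\Oand_{_{m}}$ gadgets already built, plugged together by Lemma~\ref{lem:combination}; sizes are $O(n^2)$ vertices, $O(n^3)$ edges.

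The main obstacle is the uniqueness-of-extension bookkeeping in the $\Oand$ gadget: because several $\matr{L}$-structures share the vertices $x$, $y$, and the reference clique, one must argue that the constraints they jointly impose have \emph{exactly one} consistent completion for each admissible input (not merely at least one), and that no \emph{spurious} completion sneaks in for inadmissible inputs. This is exactly the kind of argument carried out for $\matr{G}_{_{\tau(i,j),n}}$ in Proposition~\ref{pro:permutations}, so the obstacle is more a matter of careful diagram-chasing than of a new idea: one fixes the colors of $R$, then propagates forced colors vertex by vertex along the $\matr{L}$-gadgets (each of which is rigid by Lemma~\ref{lem:Lfunction}), checking at the intermediate vertex $w$ that the only surviving color encodes $x\wedge y$ and that the remaining vertices of the embedded $\matr{L}$-structures are then completely determined.
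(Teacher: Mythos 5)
Your proposal follows essentially the same route as the paper: reduce to $\Oand_{_{m}}$ and $\Onot_{_{m}}$ via Lemma~\ref{lem:combination} (with $\Oor_{_{m}}$ obtained by composition), realize $\Onot_{_{m}}$ by a single edge between two vertices pinned to $\{0,1\}$ by edges into the reference clique, and realize $\Oand_{_{m}}$ by the $\matr{L}$-gadget construction of Figure~\ref{fig:and}, verified by the same case analysis over $\{0,1\}^{2}$ plus the out-of-domain inputs. The one detail where your description drifts from the actual gadget is that the $y$-input reaches $w$ not through an $\matr{L}$-gadget ``copy'' but through a $\Cs^{+}_{_{m}}$ shift followed by a plain edge (so that $w\neq y+1$ while $\matr{L}_{_{0,n}}[x,w;R]$ and the pinning of $w$ to $\{0,1,2\}$ do the rest); this does not change the structure or correctness of your argument.
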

\begin{proof}{By Lemma~\ref{lem:combination} it suffices to prove the result for the basic Boolean partial operations $\Oand_{_{m}}$ and $\Onot_{_{m}}$.
It is easy to check that the graph $\matr{G}_{_{\Oand_{_{m}},n}}$  defined as (see Figure~\ref{fig:and})
$$
\begin{array}{ll}
\matr{G}_{_{\Oand_{_{m}},n}}[x,y;z;R] & \isdef \matr{L}_{_{0,n}}[x,w;R] + \matr{G}_{_{\Cs^{^+}_{_{m}},n}}[y,v;R]+\matr{L}_{_{1,n}}[w,z;R]+\eg[v,w]\\
&\\
&\ + \displaystyle{\sum_{i=2}^{n-1}}\eg[x,\Kv{i}]+\displaystyle{\sum_{i=2}^{n-1}}\eg[y,\Kv{i}]+
\displaystyle{\sum_{i=3}^{n-1}}\eg[w,\Kv{i}]+\displaystyle{\sum_{i=2}^{n-1}}\eg[z,\Kv{i}]
\end{array}
$$
simulates the $\Oand_{_{m}}$ Boolean partial operation. Also, similarly, one may verify that the graph
$$
\begin{array}{ll}
\matr{G}_{_{\Onot_{_{m}},n}}[x;y;R] & \isdef \eg[x,y]+\matr{K}_{_{n}}[R]
+\displaystyle{\sum_{i=2}^{n-1}}\eg[x,\Kv{i}]
+\displaystyle{\sum_{i=2}^{n-1}}\eg[y,\Kv{i}]
\end{array}
$$
simulates the $\Onot_{_{m}}$ Boolean function.
}\end{proof}
\subsection{An edge-simulation gadget}
Our main objective in this section is to prove that the following partial function
$$\begin{array}{ll}
\Edg_{_r}: S^{r} \times S^{r} \rightarrow S^{r}\;, &\quad \Edg_{_r}(\vect{u},\vect{v}) \isdef  \left \{ \begin{array}{ll}
                                                                                      \vect{v} & \text{if $\vect{u} \not = \vect{v}$,}\\
                                                                                     {\rm undefined} & \text{if $\vect{u} = \vect{v}$,} \end{array}\right.
\end{array}
$$
can be simulated through $n$-colorings. Note that the coloring behavior of $\Edg_{_r}(\vect{u},\vect{v})$ with respect to $r$-vectors $\vect{u}$ and $\vect{v}$ is like an edge. In this regard, we will need a control gadget that behaves very similar to a binary multiplexor:
\begin{defin}{Let us define,
$${\bf U}_{m} \isdef \{\vect{1}_{_{m}}^{^i} \ \ | \ \ 0 \leq i \leq m-1\}\;.$$
Then the $\Mux_{_m}$ function is defines as,
$$\begin{array}{ll}
\Mux_{_m}: S^{m} \times {\bf U}_{m} \rightarrow S\;, &\quad \Mux_{_m}(\vect{v},\vect{1}_{_{m}}^{^i}) \isdef  v_{_i}\;.
\end{array}
$$
}\end{defin}
In the next lemma we prove that $\Mux_{_m}$ can be simulated through $n$-colorings. The main idea behind the proof is to reduce
the operations to the Boolean level and then again extend to the $S$-level using the functions introduced in the previous section.
\begin{lem}{
For any pair of integers $m \geq 2$ and $ n \geq \max\{m,3\}$, there is a graph
$\matr{G}_{_{\Mux_{_m}},n}[\vect{v},\vect{u};t;R]$ that
simulates the function $\Mux_{_{m}}$ through $n$-colorings. Moreover,
$$|V(\matr{G}_{_{\Mux_{_m},n}})| \simeq  O(n^4) \quad {\rm and} \quad |E(\matr{G}_{_{\Mux_{_m},n}})| \simeq  O(n^5).$$
}\end{lem}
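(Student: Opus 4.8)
\ The plan is to present $\Mux_{_{m}}$ as a feed-forward network of gadgets already shown to be simulatable, and then to finish by Lemma~\ref{lem:combination} together with the remark that feed-forward circuits of simulatable partial functions are simulatable. The guiding idea, as hinted, is that selecting one of $v_{_{0}},\ldots,v_{_{m-1}}\in S$ according to a one-hot selector $\vect{u}=\vect{1}_{_{m}}^{^i}$ becomes plain \emph{bit-wise} Boolean multiplexing once we push the $v_{_{j}}$ through the encoding $\Ch_{_{m}}$ and read the answer back through its inverse $\Ps_{_{m}}$.

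Concretely, I would first run each $v_{_{j}}$, $0\le j<m$, through a copy of $\matr{G}_{_{\Ch_{_{m}},n}}$, producing the one-hot vector $\Ch_{_{m}}(v_{_{j}})=\vect{1}_{_{m}}^{^{v_{_{j}}}}$ with coordinates $a_{j,0},\ldots,a_{j,m-1}$; this step already forces $\sigma(v_{_{j}})\in S$. Next, for each $j$ and each bit position $\ell$, I would form $b_{j,\ell}\isdef\Oand_{_{m}}(a_{j,\ell},u_{_{j}})$ with a copy of $\matr{G}_{_{\Oand_{_{m}},n}}$, and then, for each $\ell$, the $m$-ary disjunction $c_{\ell}\isdef\Oor_{_{m}}(b_{0,\ell},\ldots,b_{m-1,\ell})$, which is an extended Boolean partial function and hence simulatable by Lemma~\ref{lem:boolean} (realised by a tree of $m-1$ binary $\matr{G}_{_{\Oor_{_{m}},n}}$ gadgets). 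The output is then $t\isdef\Ps_{_{m}}(c_{0},\ldots,c_{m-1})$, a copy of $\matr{G}_{_{\Ps_{_{m}},n}}$. Finally I would attach one further copy of $\matr{G}_{_{\Ps_{_{m}},n}}$ fed by $\vect{u}$, with its output vertex left dangling, serving purely as a \emph{validity check} forcing $\vect{u}\in{\bf U}_{m}$. All these pieces share the reference clique $\matr{K}_{_{n}}[R]$, and equally-labelled vertices are identified in the amalgam; in particular each $u_{_{j}}$ is shared by $m$ of the $\Oand$-gadgets and by the validity check.

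To check correctness: if $\vect{u}=\vect{1}_{_{m}}^{^i}$, then $b_{j,\ell}=a_{i,\ell}$ for $j=i$ and $b_{j,\ell}=0$ otherwise, so $(c_{0},\ldots,c_{m-1})=\vect{1}_{_{m}}^{^{v_{_{i}}}}$ and $t=\Ps_{_{m}}(\vect{1}_{_{m}}^{^{v_{_{i}}}})=v_{_{i}}$, as required; and since every constituent gadget extends its partial colorings uniquely, so does the whole amalgam, which establishes Definition~\ref{defin:functionsim}(iii) on the domain of $\Mux_{_{m}}$. Conversely, suppose $\vect{u}\notin{\bf U}_{m}$. If some $u_{_{j}}\notin\{0,1\}$, the corresponding $\matr{G}_{_{\Oand_{_{m}},n}}$ (and also the validity check) admits no proper extension; and if $\vect{u}\in\{0,1\}^{m}$ has a number of $1$'s different from one, then the validity-check copy of $\matr{G}_{_{\Ps_{_{m}},n}}$ admits no extension. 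In either case $\sigma_{_{0}}$ has no extension, which is exactly what is needed since $\Mux_{_{m}}$ is undefined there. The lemma then follows from Lemma~\ref{lem:combination}.

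For the size estimates, the construction uses $m$ copies of $\matr{G}_{_{\Ch_{_{m}},n}}$, $m^{2}$ copies of $\matr{G}_{_{\Oand_{_{m}},n}}$, $O(m^{2})$ binary $\matr{G}_{_{\Oor_{_{m}},n}}$ gadgets, and two copies of $\matr{G}_{_{\Ps_{_{m}},n}}$; by the earlier lemmas each has $O(n^{2})$ vertices and $O(n^{3})$ edges (the shared $\matr{K}_{_{n}}[R]$ counted once), so in total, using $m\le n$, we get $O(m^{2}n^{2})=O(n^{4})$ vertices and $O(m^{2}n^{3})=O(n^{5})$ edges. The step I expect to require the most care is not the ``if'' direction but the partiality bookkeeping: the sub-case in which $\vect{u}$ is Boolean with two $1$'s at positions $i,i'$ for which $v_{_{i}}=v_{_{i'}}$ slips through the bit-wise logic (the disjunction $(c_{0},\ldots,c_{m-1})$ stays one-hot), so the separate $\Ps_{_{m}}$-check on $\vect{u}$ is genuinely indispensable — that is the detail most easily overlooked.
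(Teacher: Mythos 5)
Your construction is correct, but it is not the decomposition the paper uses, so a comparison is in order. The paper factors $\Mux_{_{m}}$ through two named auxiliary gadgets: a masking function $\Dot_{_{m}}(i,j)$ (equal to $i$ if $j=1$, to $0$ if $j=0$, undefined otherwise), applied coordinatewise as $w_{_{i}}=\Dot_{_{m}}(v_{_{i}},u_{_{i}})$, followed by an extraction function $\Ext_{_{m}}(\vect{w})$ that returns the common nonzero symbol when all $w_{_{i}}\in\{0,k\}$; each of these gadgets itself descends to the Boolean level via $\Ch_{_{m}}$ (and $\Ex_{_{m}}$) and climbs back via $\Ps_{_{m}}$, so the paper's circuit round-trips through the $S$-level several times, whereas you stay at the bit level throughout ($\Ch_{_{m}}$ once per input, an $m\times m$ array of $\Oand_{_{m}}$'s, $m$-ary $\Oor_{_{m}}$'s, one final $\Ps_{_{m}}$). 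Both routes give the same counts, $O(m^{2})$ gadgets of size $O(n^{2})$/$O(n^{3})$, hence $O(n^{4})$ vertices and $O(n^{5})$ edges. The paper's version buys modularity — $\Dot_{_{m}}$ and $\Ext_{_{m}}$ are displayed as reusable black boxes with their own figures — while yours buys a flatter circuit and, more importantly, a cleaner treatment of the ``only if'' half of Definition~\ref{defin:functionsim}(iii): your closing observation about selectors with two ones at positions carrying equal values (and likewise $\vect{u}=\vect{0}_{_{m}}$, for which the paper's $\Ext_{_{m}}$-stage happily outputs $0$) identifies exactly the inputs outside ${\bf U}_{m}$ on which the paper's composite would still extend to a proper coloring; your extra $\Ps_{_{m}}$-based validity check on $\vect{u}$ is what makes the lemma true as literally stated, whereas the paper implicitly relies on $\vect{u}$ always being produced by an upstream $\Ch_{_{m}}$ in its later applications ($\Add_{_{m}}$, $\Sub_{_{m}}$, $\Edg_{_{r}}$). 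So your proof is not only a valid alternative but slightly more robust on the partiality bookkeeping.
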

\begin{figure}[t]
\special{em:linewidth 1pt} \unitlength 0.25mm \linethickness{0.4pt}
  \begin{center}
        $$\diagD$$
             \end{center}
\caption{\protect\label{ucg3} $\matr{G}_{_{\Dot_{_{m}},n}}[x,y;z;R]$ }
\label{fig:D}
\end{figure}
\begin{figure}[t]
\special{em:linewidth 1pt} \unitlength 0.25mm \linethickness{0.4pt}
  \begin{center}
        $$\diagP$$
         \end{center}
\caption{\protect\label{ucg3}
$\matr{G}_{_{\Ext_{_m},n}}[\vect{x};t;R]$} \label{fig:P}
\end{figure}
\begin{proof}{In the proof we need the following functions and their coloring simulations.
$$\begin{array}{ll}
\Dot_{_{m}}: S\times \{0,1\}   \rightarrow S\;, &\quad \Dot_{_{m}}(i,j)\isdef \left \{ \begin{array}{ll}
                                                                 0 & \text{if $j=0$,} \\
                                                                 i & \text{if $j=1$,} \end{array}\right.\\
&\\
\Ext_{_m}: S^{m} \rightarrow S\;, & \quad \Ext_{_m}(\vect u)\isdef \left \{ \begin{array}{ll}
                                                                 k & \text{if $\forall\ i \ \ u_{_{i}} \in \{0,k\}$,} \\
                                                                 {\rm undefined}  &  \text{otherwise.} \end{array}\right.
\end{array}
$$
\begin{itemize}
\item[]{{\bf claim}$\ 1.$ {\it Using Lemma~{\rm\ref{lem:combination}}, it is straight forward to check that the graph $\matr{G}_{_{\Dot_{_{m}},n}}[x,y;z;R]$
defined as follows $($see Figure~{\rm \ref{fig:D}$)$}
simulates the function $\Dot_{_{m}}$ through $n$-colorings. Moreover,
$$|V(\matr{G}_{_{\Dot_{_{m}},n}})| \simeq  O(n^3) \quad {\rm and} \quad |E(\matr{G}_{_{\Dot_{_{m}},n}})| \simeq  O(n^4).$$
}\ \\
$$
\begin{array}{ll}
\matr{G}_{_{\Dot_{_{m}},n}}[x,y;z;R] & \isdef
\matr{G}_{_{\Ch_{_{m}},n}}[x;\vect{v};R]+\matr{G}_{_{\Ex_{_{m}},n}}[y;\vect{u};R]+\matr{G}_{_{\Ps_{_m},n}}[\vect{w};z;R]\\
&\\
&+\matr{G}_{_{\Oor_{_{m}},n}}[v_{_{0}},t;w_{_{0}};R]+\matr{G}_{_{\Onot_{_{m}},n}}[u_{_{0}};t;R]\\
&\\
&+\displaystyle{\sum_{k=1}^{m-1}}\matr{G}_{_{\Oand_{_{m}},n}}[v_{_{k}},u_{_{k}};w_{_{k}};R].
\end{array}
$$
}
\item[]{{\bf claim}$\ 2.$ {\it Using  Lemma~{\rm \ref{lem:combination}}, it is straight forward to check that the graph $\matr{G}_{_{\Ext_{_m},n}}[\vect{x};t;R]$ defined as follows $($see  Figure~{\rm \ref{fig:P}}$)$
     simulates the function $\Ext_{_{m}}$ through $n$-colorings. Moreover,
$$|V(\matr{G}_{_{\Ext_{_{m}},n}})| \simeq  O(n^4) \quad {\rm and} \quad |E(\matr{G}_{_{\Ext_{_{m}},n}})| \simeq  O(n^5).$$
 }\\
First, note that for $r\geq 2$ and $\vect{x}=(x_{_{0}},x_{_{1}},\ldots ,x_{_{r-1}})$, by  Lemma~{\rm \ref{lem:boolean}}, the following functions can be simulated through $n$-colorings:
$$\begin{array}{ll}
 \Vor_{_{r}}: \{0,1\}^{^r}\rightarrow\{0,1\}\;, &\quad  \Vor_{_{r}}(\vect{x}) \isdef \left \{ \begin{array}{ll}
                                                                           0 & \text{if $\forall\ i \quad  x_{_{i}}=0$,} \\
                                                                           1 & \text{otherwise,} \end{array}\right.\\
&\\
\Vand_{_{r}}: \{0,1\}^{^r}\rightarrow\{0,1\}\;, &\quad  \Vand_{_{r}}(\vect{x})\isdef \left \{ \begin{array}{ll}
                                                                            0 & \text{if $\exists\ i \quad  x_{_{i}}=0$,}\\
                                                                            1 & \text{otherwise.} \end{array}\right.\\
\end{array}
$$
Then define
$$\begin{array}{ll}
 \matr{G}_{_{\Ext_{_m},n}}[\vect{x};t;R] &\isdef  \matr{G}_{_{\Vand_{_m},n}}[v_{_{0}}^{0},v_{_{0}}^{1},\ldots,v_{_{0}}^{{m-1}};w_{_{0}};R]\\
 &\\
   &+\matr{G}_{_{\Ps_{_m},n}}[\vect{w};t;R]+\displaystyle{\sum_{i=0}^{m-1}}\matr{G}_{_{\Ch_{_{m}},n}}[x_{_{i}};\vect{v}^{i};R]\\
&\\
&+\displaystyle{\sum_{i=1}^{m-1}}\matr{G}_{_{\Vor_{_m},n}}[v_{_{i}}^{0},v_{_{i}}^{1},\ldots, v_{_{i}}^{{m-1}};w_{_{i}};R].
\end{array}
$$
}
\end{itemize}
Now, it can be verified that the graph defined as (see Figure~\ref{fig:Mux})
$$
\begin{array}{ll}
\matr{G}_{_{\Mux_{_m}},n}[\vect{v},\vect{u};t;R] & \isdef \displaystyle{\sum_{i=0}^{m-1}}{\matr{G}_{_{\Dot_{_{m}},n}}}[v_{_{i}},u_{_{i}};w_{_{i}};R]+\matr{G}_{_{\Ext_{_m},n}}[\vect{w};t;R],
\end{array}
$$
simulates the multiplexor function through $n$-colorings.
}\end{proof}
\begin{figure}[t]
   \special{em:linewidth 1pt} \unitlength 0.25mm \linethickness{0.4pt}
   \begin{center}
        $$\diagMux$$
           \end{center}
   \caption{ \protect\label{ucg3} $\matr{G}_{_{\Mux_{_m}},n}[\vect{v},\vect{u};t;R]$  }
   \label{fig:Mux}
\end{figure}
Finally,  we focus on the basic arithmetic operations.
\begin{defin}{
$$\begin{array}{ll}
 \Add_{_{m}}: S \times S \rightarrow S\;, &\quad  \Add_{_{m}} (x,y) \isdef x+y \pmod{m}\;,\\
 \Sub_{_{m}}: S \times S \rightarrow S\;, &\quad  \Sub_{_{m}} (x,y) \isdef x-y \pmod{m}\;.
\end{array}
$$
}\end{defin}
\begin{lem}\label{lem:arthmetic}
For any pair of integers $m \geq 2$ and $ n \geq \max\{m,3\}$,
there are graphs $\matr{G}_{_{\Add_{_{m}},n}}$ and $\matr{G}_{_{\Sub_{_{m}},n}}$ that simulate the functions
$\Add_{_{m}}$ and $\Sub_{_{m}}$ through $n$-colorings, respectively. Moreover,
$$\begin{array}{lll}
|V(\matr{G}_{_{\Add_{_{m}},n}})| &\simeq |V(\matr{G}_{_{\Sub_{_{m}},n}})|& \simeq O(n^4), \\
|E(\matr{G}_{_{\Add_{_{m}},n}})| &\simeq |E(\matr{G}_{_{\Sub_{_{m}},n}})|& \simeq O(n^5).
\end{array}
$$
\end{lem}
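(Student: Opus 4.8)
The plan is to realize $\Add_{_{m}}$ and $\Sub_{_{m}}$ as feed-forward circuits assembled entirely from gadgets already shown to be simulatable, and then to appeal to Lemma~\ref{lem:combination}. The key observation is that adding $y\in S$ modulo $m$ is the same as applying the cyclic successor $\Cs^{+}_{_{m}}$ exactly $y$ times, and choosing ``how many times to iterate'' is precisely what a multiplexor does. Concretely, put $w_{_{0}}\isdef x$ and, for $1\le k\le m-1$, let $w_{_{k}}$ be the output of a fresh copy of the $\Cs^{+}_{_{m}}$-gadget applied to $w_{_{k-1}}$; then in every proper $n$-coloring the vertex $w_{_{k}}$ carries the color $x+k\pmod m$. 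Now feed the vector $\vect{w}=(w_{_{0}},w_{_{1}},\ldots,w_{_{m-1}})\in S^{m}$ together with the one-hot vector $\Ch_{_{m}}(y)=\vect{1}_{_{m}}^{^{y}}\in{\bf U}_{m}$ into the multiplexor gadget $\matr{G}_{_{\Mux_{_m},n}}$ constructed in the preceding lemma. Since $\Mux_{_m}(\vect{w},\vect{1}_{_{m}}^{^{y}})=w_{_{y}}=x+y\pmod m$, the resulting amalgam simulates $\Add_{_{m}}$; because $\Ch_{_{m}}$ always outputs an element of ${\bf U}_{m}$, the domain restriction of $\Mux_{_m}$ is automatically met and the composite circuit is total, consistent with $\Add_{_{m}}$ being total. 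Lemma~\ref{lem:combination} then delivers both the simulation and the uniqueness of the coloring extension.

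For $\Sub_{_{m}}$ one proceeds identically, replacing each $\Cs^{+}_{_{m}}$-gadget by a $\Cs^{-}_{_{m}}$-gadget, so that $w_{_{k}}$ carries $x-k\pmod m$; equivalently, since $i\mapsto -i\pmod m$ is an involution on $S$ and hence a product of at most $\lfloor m/2\rfloor$ transpositions, it is simulatable by Proposition~\ref{pro:permutations}, and $\Sub_{_{m}}$ is then the feed-forward circuit $(x,y)\mapsto\Add_{_{m}}(x,-y\bmod m)$, which is simulatable by Lemma~\ref{lem:combination}.

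It remains to bound the sizes. Each of the $m-1$ copies of the $\Cs^{\pm}_{_{m}}$-gadget has $O(n^{3})$ vertices and $O(n^{4})$ edges (as recorded after Proposition~\ref{pro:permutations}), so chained together they contribute $O(mn^{3})=O(n^{4})$ vertices and $O(mn^{4})=O(n^{5})$ edges, using $m\le n$. The $\Ch_{_{m}}$-gadget adds only $O(n^{2})$ vertices and $O(n^{3})$ edges, while the multiplexor gadget $\matr{G}_{_{\Mux_{_m},n}}$ contributes $O(n^{4})$ vertices and $O(n^{5})$ edges and therefore dominates. Amalgamation, which only identifies the reference vertices and glues outputs of one gadget to inputs of the next, does not change these counts asymptotically, yielding the claimed $O(n^{4})$ and $O(n^{5})$ bounds.

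I do not expect a genuine obstacle here: the argument is a direct assembly of previously established gadgets. The only points demanding care are bookkeeping ones — verifying that the type of each intermediate signal matches the domain and codomain of the next gadget (in particular that the selector fed into $\Mux_{_m}$ really lands in ${\bf U}_{m}$), checking that the partial-function semantics compose as intended so that the ``undefined'' alternative of Definition~\ref{defin:functionsim} never arises for the total functions $\Add_{_{m}}$ and $\Sub_{_{m}}$, and confirming that the order-of-magnitude estimates survive the amalgamation. These are exactly the verifications that Lemma~\ref{lem:combination} is designed to absorb.
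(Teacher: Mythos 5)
Your construction is exactly the one the paper uses: chain $m-1$ copies of the $\Cs^{\pm}_{_{m}}$-gadget from $x$ to produce the vector of shifted values, apply $\Ch_{_{m}}$ to $y$ to get the one-hot selector, and feed both into $\matr{G}_{_{\Mux_{_m}},n}$, with the same size accounting. The alternative route for $\Sub_{_{m}}$ via the inversion $i\mapsto -i\pmod m$ that you mention in passing is also noted parenthetically in the paper (where it is observed to yield a larger graph), so the proposal matches the paper's proof essentially verbatim.
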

\begin{figure}[t]
   \special{em:linewidth 1pt} \unitlength 0.25mm \linethickness{0.4pt}
   \begin{center}
        $$\diagDel$$
           \end{center}
   \caption{\protect\label{ucg3} $\matr{G}_{_{F,n}}[v_{_{0}},y;t;R]$ }
   \label{fig:Del}
\end{figure}
\begin{proof}{It is easy to see that using the setup of Figure~\ref{fig:Del}
and using the functions $\Cs^{+}_{m}$ and $\Cs^{-}_{m}$ for the black-box $F$ one gets the following graphs
$$\begin{array}{ll}
\matr{G}_{_{\Add_{_{m}},n}}[v_{_{0}},y;t;R]  &\isdef  \matr{G}_{_{\Ch_{_{m}},n}}[y;\vect{u};R]
+\matr{G}_{_{\Mux_{_m}},n}[\vect{v},\vect{u};t;R]\\
&\\
&+\displaystyle{\sum_{i=0}^{m-2}}\matr{G}_{_{\Cs^{+}_{_m},n}}[v_{_{i}};v_{_{i+1}};R],\\
&\\
\matr{G}_{_{\Sub_{_{m}},n}}[v_{_{0}},y;t;R]  &\isdef  \matr{G}_{_{\Ch_{_{m}},m}}[y;\vect{u};R]
+\matr{G}_{_{\Mux_{_m}},n}[\vect{v},\vect{u};t;R]\\
&\\
&+\displaystyle{\sum_{i=0}^{m-2}}\matr{G}_{_{\Cs^{-}_{_m},n}}[v_{_{i}};v_{_{i+1}};R],
\end{array}
$$
that simulate the $\Add_{_{m}}$ and $\Sub_{_{m}}$ functions.\\
(It is worth noting that by Proposition~\ref{pro:permutations},
the inversion $i\mapsto -i \pmod{m}$ can also be simulated through $n$-colorings,
and consequently, one can also construct the simulator of $\Sub_{_{m}}$ by a composition of $\Add_{_{m}}$ and inversion.  However, this method will give
rise to an increase in the size of the corresponding graph.)
}\end{proof}
Now, we are ready to prove the main result of this section.
\begin{pro}\label{pro:edgesimulation}
For any integers $m \geq 2$ and $ n \geq \max\{m,3\}$, $r \geq 1$ and $S = \{0,1,\ldots,m-1\}$, there is a graph
$\matr{E}_{_{r,n}}[\vect{u},\vect{v};\vect{v};R]$ that
simulates the function $\Edg_{_{r}}: S^{r} \times S^{r} \rightarrow S^{r}$ through $n$-colorings.
Moreover,
$$|V(\matr{E}_{_{r,n}})| \simeq  O(rn^4) \quad {\rm and} \quad |E(\matr{E}_{_{r,n}})| \simeq  O(rn^5).$$
\end{pro}
\begin{proof}{If $r=1$ then define
$\matr{E}_{_{1,n}}[u,v;v;R] \isdef e[u,v].$
If $r > 1$ then define  $\matr{E}_{_{r,n}}[\vect{u},\vect{v};\vect{v};R]$ as
$$\begin{array}{ll}
\matr{E}_{_{r,n}}[\vect{u},\vect{v};\vect{v};R] &\isdef \displaystyle{\sum_{i=0}^{r-1}}\matr{G}_{_{\Sub_{_{m}},n}}[u_{_{i}},v_{_{i}};w_{_{i}};R]
+\displaystyle{\sum_{i=0}^{r-1}}\matr{L}_{_{0,n}}[w_{_{i}},z_{_{i}};R]\\
&\\
&+\matr{G}_{_{\Vor_{_r},n}}[z_{_{0}},z_{_{1}},\ldots,z_{_{r-1}};y;R]+\eg[y,\Kv{0}].
\end{array}
$$
and note that in any $n$-coloring, $\sigma$, the vertex $\sigma(y)=0$ if and only if for all $0 \leq i \leq r-1$ we have
$\sigma(u_{_{i}})=\sigma(v_{_{i}})$.
 }\end{proof}
\section{The main theorem}\label{sec:main}
Consider the space of functions $${\bf Fin} \isdef \{\varphi: A \to B \ \ | \ \ |A| < \infty, \ \ |B| < \infty \}.$$
 In this section we show that
${\bf Fin}$ can simulated through graph colorings.
One of the important aspects of our result is the fact that the number of colors, $n$,  can be as small as $\max\{m,3\}$, where for this and the function coding we adopt an edge simulation gadget in a cylindrical construction, along with a general idea of using an extension to invertible functions as it is usually done in quantum computing (e.g. see  \cite{NICH00}). We start with the extension lemma.
\begin{lem}
\label{lem:geninvfunc}
   Given integers $p \geq 1$, $q \geq 1$, a set $S = \{0,1,\ldots,m-1\}$ and a partial function $\varphi: S^{^p} \to S^{^q}$,
   let $\tilde{S}$ be a superset of $S$  of size $\tilde{m}$ $($i.e. {\rm w.l.g.} $S \subseteq \tilde{S} \isdef \{0,1,\ldots,\tilde{m}-1\}$ $)$ and let $r$ be an integer that satisfies the following inequality,
   $$p\log{m} \leq (r-q)\log{\tilde{m}}.$$
   Then,  for every $s_{_{0}}\in S$, there exists an invertible total function $($i.e. a permutation on $\tilde{S}^{^r})$
   $\tilde{\varphi}: \tilde{S}^{^r} \to \tilde{S}^{^r}$  such that
   $$
      \forall \ \vect{x} \in S^{^p} \quad  \tilde{\varphi}(\vect{x},s_{_{0}},\ldots,s_{_{0}})=(\varphi(\vect{x}),\vect{y}_{_{\vect{x}}}),
   $$
   for some vector $\vect{y}_{_{\vect{x}}}$ of dimension $r-q$.
\end{lem}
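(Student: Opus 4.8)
The plan is to realize $\tilde\varphi$ as an arbitrary bijective extension of the partial injection that sends each padded input $(\vect{x},s_{_{0}},\ldots,s_{_{0}})$ --- with $\vect{x}$ ranging over the domain of $\varphi$ --- to the pair $(\varphi(\vect{x}),\iota(\vect{x}))$, where $\iota$ is a fixed encoding of $S^{^p}$ into $\tilde{S}^{^{r-q}}$. The only real point to check is that this partial map is injective, and the sole ingredient that makes this possible is the hypothesis $p\log{m}\le(r-q)\log{\tilde{m}}$, which --- since $\tilde{m}\ge m\ge 2$ --- is literally the statement that $m^{^p}\le\tilde{m}^{^{r-q}}$.

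First I would fix $s_{_{0}}\in S$ and an injection $\iota:S^{^p}\hookrightarrow\tilde{S}^{^{r-q}}$; the latter exists precisely because $|S^{^p}|=m^{^p}\le\tilde{m}^{^{r-q}}=|\tilde{S}^{^{r-q}}|$. (If one later wants $\tilde\varphi$ to be explicitly and polynomially computable, as needed for the effective version in Theorem~\ref{thrm:effectivealg}, one simply takes $\iota$ to write the base-$m$ value of $\vect{x}$ in base $\tilde{m}$.) Writing $D\subseteq S^{^p}$ for the domain of $\varphi$, set $A\isdef\{\,(\vect{x},s_{_{0}},\ldots,s_{_{0}})\,:\,\vect{x}\in D\,\}\subseteq\tilde{S}^{^r}$, where there are $r-p$ trailing copies of $s_{_{0}}$ (so this presupposes $r\ge p$, which is anyway built into the shape of the claimed identity), and define $\psi:A\to\tilde{S}^{^r}$ by $\psi(\vect{x},s_{_{0}},\ldots,s_{_{0}})\isdef(\varphi(\vect{x}),\iota(\vect{x}))$. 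This is well defined because $\vect{x}\in D$ makes $\varphi(\vect{x})$ exist, and it lands in $S^{^q}\subseteq\tilde{S}^{^q}$, so the right-hand side is a genuine element of $\tilde{S}^{^q}\times\tilde{S}^{^{r-q}}=\tilde{S}^{^r}$.

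Next I would observe that $\psi$ is injective: the assignment $(\vect{x},s_{_{0}},\ldots,s_{_{0}})\mapsto\vect{x}$ identifies $A$ with $D$, and on $D$ the map $\vect{x}\mapsto(\varphi(\vect{x}),\iota(\vect{x}))$ is injective already in its second coordinate, no matter how far $\varphi$ is from being injective. Hence $\psi$ restricts to a bijection $A\to\psi(A)$, so $|\tilde{S}^{^r}\setminus A|=\tilde{m}^{^r}-|A|=\tilde{m}^{^r}-|\psi(A)|=|\tilde{S}^{^r}\setminus\psi(A)|$; these being finite sets, there is a bijection $g:\tilde{S}^{^r}\setminus A\to\tilde{S}^{^r}\setminus\psi(A)$. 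Let $\tilde\varphi$ agree with $\psi$ on $A$ and with $g$ elsewhere. Since $\psi(A)$ and $\tilde{S}^{^r}\setminus\psi(A)$ partition $\tilde{S}^{^r}$, the map $\tilde\varphi$ is a bijection of $\tilde{S}^{^r}$, i.e. a permutation, and for every $\vect{x}\in D$ we get $\tilde\varphi(\vect{x},s_{_{0}},\ldots,s_{_{0}})=\psi(\vect{x},s_{_{0}},\ldots,s_{_{0}})=(\varphi(\vect{x}),\vect{y}_{_{\vect{x}}})$ with $\vect{y}_{_{\vect{x}}}\isdef\iota(\vect{x})$ a vector of dimension $r-q$, exactly as required. (For $\vect{x}\in S^{^p}\setminus D$ the statement imposes nothing on $\tilde\varphi(\vect{x},s_{_{0}},\ldots,s_{_{0}})$, and indeed that value is simply whatever $g$ dictates.)

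As for the difficulty: there is essentially none. The lemma is a repackaging of the standard reversibilization device $(\vect{x},\vect{b})\mapsto(\vect{x},\vect{b}\oplus f(\vect{x}))$ familiar from reversible and quantum computation --- here in the variant where the input is absorbed into the auxiliary output slots --- and the only quantitative content is the one-line verification that the stated counting inequality is exactly what guarantees the encoding $\iota$ exists; everything else is routine bookkeeping about extending an injection between equinumerous subsets of a finite set to a bijection of the whole set.
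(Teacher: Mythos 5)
Your proof is correct, but it takes a genuinely different route from the paper. The paper invokes P.~Hall's SDR theorem: it assigns to each $\vect{z}\in\tilde{S}^{^r}$ a set $A_{_{\vect{z}}}$ (equal to $\{(\varphi(\vect{x}),\vect{y}):\vect{y}\in\tilde{S}^{^{r-q}}\}$ when $\vect{z}$ is a padded input $(\vect{x},s_{_{0}},\ldots,s_{_{0}})$, and to all of $\tilde{S}^{^r}$ otherwise), checks that the counting inequality yields Hall's condition, and defines $\tilde{\varphi}(\vect{z})$ to be a system of distinct representatives. You instead make the injectivity explicit: you use the inequality to build an encoding $\iota:S^{^p}\hookrightarrow\tilde{S}^{^{r-q}}$, send each padded input to $(\varphi(\vect{x}),\iota(\vect{x}))$ --- injective by the second coordinate alone --- and extend to a permutation by an arbitrary bijection between the complements. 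The two arguments consume the hypothesis $p\log m\leq(r-q)\log\tilde{m}$ at exactly the same point (it is what makes either $\iota$ exist or Hall's condition hold), but yours is more elementary and more constructive: it avoids citing the SDR theorem, and the explicit choice of $\iota$ (e.g.\ base conversion) makes $\tilde{\varphi}$ visibly computable, which is friendlier to the effectiveness claims of Theorem~\ref{thrm:effectivealg} than an unspecified SDR. The paper's formulation is slightly more uniform in that it treats defined and undefined inputs by one mechanism, whereas you handle the partial domain by restricting to $D$ and remarking that the conclusion is vacuous off $D$; both readings are consistent with how the paper itself totalizes $\varphi$ before using the lemma.
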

\begin{proof}{ The proof is essentially an straight forward application of \auth{P. Hall}'s SDR theorem (e.g. see \cite{VLWIL01}).
For more details, fix $s_{_{0}}\in S$ arbitrarily and construct $\tilde{\varphi}$ as follows.
Consider an arbitrary vector  $\vect{z}$ in $\tilde{S}^{^r}$, and define the subsets $A_{_{\vect{z}}} \subseteq \tilde{S}^{^r}$ as follows,
$$
A_{_{\vect{z}}} \isdef \left \{ \begin{array}{ll}
                           \{(\varphi(\vect{x}),\vect{y}) \ \ | \ \ \vect{y} \in \tilde{S}^{^{r-q}} \}\;, & \quad \text{if $\exists\ \vect{x} \in S^{^p} \ \  \vect{z}= (\vect{x},s_{_{0}},\ldots,s_{_{0}})$,}\\
                            & \\
                            \tilde{S}^{^r} & \quad \text{otherwise.}
                                \end{array}\right.
$$
Then the inequality in the hypothesis guarantees that
these sets satisfy \auth{P. Hall}'s SDR condition and hence there exists a system of distinct representatives $\{\vect{a}_{_{\vect{z}}}\}_{_{\vect{z} \in \tilde{S}^{^r}}}$.
Hence, one may define
$$\tilde{\varphi}(\vect{z}) \isdef \vect{a}_{_{\vect{z}}}\;.$$
}\end{proof}
The function $\tilde{\varphi}$ of Lemma~\ref{lem:geninvfunc} is called an $(\tilde{m},r,s_{_{0}})$-invertible extension of $\varphi$.
\begin{thm}\label{thrm:main}
 Given $p\geq 1$ , $q\geq 1$ , $m > 1$, $n \geq \max\{m,3\}$, a set of size $m$ $(${\rm w.l.g.}  $S = \{0,1,\ldots,m-1\})$ and a partial function $\varphi: S^{^p} \to S^{^q}$,
 there exists a graph $\matr{G}_{_{\varphi,n}}[\vect{x};\vect{y};R]$ that simulates $\varphi$ through $n$-colorings.
 Moreover,
$$|V(\matr{G}_{_{\varphi,n}})| \simeq  O(\Theta(p+q) n^{2(p+q)+4})$$
 and
$$  |E(\matr{G}_{_{\varphi,n}})| \simeq  O(\Theta(p+q) n^{3(p+q)+5}).$$
\end{thm}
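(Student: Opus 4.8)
The plan is to follow the route advertised in the introduction: reduce to a \emph{total, invertible} function via Lemma~\ref{lem:geninvfunc}, simulate that invertible function by chaining together the gadgets of Sections~\ref{sec:invertiblefuncs} and~\ref{sec:gadgets} (this is the step that compresses a large coloring space into an $n$-color one), and finally recover $\varphi$ together with its partiality by pinning the padding inputs and using an $\Edg$-type gadget to block the inputs outside the domain.

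First I would take $\tilde S = S$ (so $\tilde m = m$) and choose $r = \Theta(p+q)$ large enough to satisfy the inequality of Lemma~\ref{lem:geninvfunc} with a constant number of spare coordinates. Applying Lemma~\ref{lem:geninvfunc} (with, say, $s_{_0}=0$) produces a permutation $\tilde\varphi : S^{^r} \to S^{^r}$ which on the slice $(\vect{x},0,\ldots,0)$ outputs $(\varphi(\vect{x}),\vect{y}_{_{\vect{x}}})$ whenever $\vect{x}$ is in the domain of $\varphi$. A minor refinement of the SDR argument in the proof of Lemma~\ref{lem:geninvfunc} — arranging that one reserved garbage coordinate of $\tilde\varphi(\vect{x},0,\ldots,0)$ equals $0$ for $\vect{x}$ in the domain and is nonzero otherwise — turns that coordinate into a faithful domain indicator. (Equivalently one may keep Lemma~\ref{lem:geninvfunc} verbatim and append a separate domain-indicator subgraph computing $\bigvee_{\vect{t}}[\vect{x}=\vect{t}]$ over all $\vect{t}$ in the domain, built from the $\Sub_{_m}$, $\Vor_{_r}$ and Boolean gadgets of Section~\ref{sec:gadgets} together with Lemma~\ref{lem:combination}; either way the domain test is obtained from gadgets already established.)

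The core step is to simulate the permutation $\tilde\varphi$ of $S^{^r}$ through $n$-colorings with only $n \geq \max\{m,3\}$ colors. Write $\tilde\varphi$ as a product of $\Theta$ transpositions of $S^{^r}$; by Lemma~\ref{lem:combination} it suffices to handle a single transposition $\tau=(\vect{a}\ \vect{b})$ of $S^{^r}$. This is the promised coloring-space compression: one imitates the construction of $\matr{G}_{_{\tau(i,j),n}}$ in Proposition~\ref{pro:permutations} — a backbone of $\matr{L}$-type components forcing $\vect{y}=\vect{x}$ when $\vect{x}\notin\{\vect{a},\vect{b}\}$, with a dedicated swap part for $\vect{a}\leftrightarrow\vect{b}$ — except that every scalar $\matr{L}_{_{k,n}}[x,y]$ and every scalar edge is replaced by its $r$-coordinate counterpart, assembled cylindrically from the scalar $\matr{L}_{_{k,n}}$ gadgets, the comparison/selection gadgets $\Sub_{_m}$, $\Ch_{_m}$, $\Ps_{_m}$, $\Mux_{_m}$, the Boolean gadgets $\Vand_{_r}$, $\Vor_{_r}$, $\Onot_{_m}$, and crucially the edge-simulation gadget $\matr{E}_{_{r,n}}$ of Proposition~\ref{pro:edgesimulation}, which supplies exactly the ``force these two $r$-tuples to be distinct'' primitive. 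Uniqueness of the extension and the forcing of $R$ onto distinct colors propagate automatically, since each constituent gadget has these properties and Lemma~\ref{lem:combination} preserves them.

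Finally I would assemble $\matr{G}_{_{\varphi,n}}$: start from the simulator of $\tilde\varphi$, identify its first $p$ input vertices with $x_{_0},\ldots,x_{_{p-1}}$, add edges $\eg[X_{_j},\Kv{1}],\ldots,\eg[X_{_j},\Kv{n-1}]$ for $p\leq j<r$ to force the padding inputs to $0$, designate the first $q$ output vertices as $y_{_0},\ldots,y_{_{q-1}}$, and attach a small gadget (edges $\eg[g,\Kv{1}],\ldots,\eg[g,\Kv{n-1}]$, or an $\matr{E}$-gadget) forcing the domain-indicator vertex $g$ to $0$. For any $\sigma_{_0}$ with $\sigma_{_0}(\Kv{i})=i$, the simulator of $\tilde\varphi$ forces the unique extension with outputs $\tilde\varphi(\sigma_{_0}(\vect{x}),0,\ldots,0)$, and the appended gadget is satisfiable precisely when the indicator coordinate is $0$, i.e. when $(\sigma_{_0}(x_{_0}),\ldots,\sigma_{_0}(x_{_{p-1}}))$ lies in the domain of $\varphi$, in which case the first $q$ outputs read off $\varphi(\sigma_{_0}(\vect{x}))$ and the extension is unique — which is exactly Definition~\ref{defin:functionsim}; the degenerate empty-domain case can be taken care of by hand. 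The size bounds then follow by multiplying the counts recorded in Proposition~\ref{pro:permutations}, Lemma~\ref{lem:arthmetic} and Proposition~\ref{pro:edgesimulation} through the $r=\Theta(p+q)$ coordinatewise expansions and the $\Theta$ transpositions. The main obstacle is precisely the core step: realizing a transposition of the exponentially large set $S^{^r}$ with only $n$ colors while keeping the construction polynomial and the extension unique, together with the delicate bookkeeping that makes the partiality land exactly right (no spurious colorings outside the domain, a genuinely unique extension inside it).
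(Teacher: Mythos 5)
Your proposal is correct and follows essentially the same route as the paper: invertible extension via Lemma~\ref{lem:geninvfunc}, decomposition into transpositions handled by Proposition~\ref{pro:permutations}, cylindrical compression of the $n^{r}$-color simulator into an $n$-color one by replacing each edge with the gadget $\matr{E}_{_{r,n}}$ of Proposition~\ref{pro:edgesimulation}, and pinning of the padding inputs via Lemma~\ref{lem:combination}. The only immaterial divergence is in how partiality is encoded: you use a domain-indicator coordinate (or an explicit domain-test subgraph), while the paper first totalizes $\varphi$ by mapping undefined inputs to a reserved symbol $*$ and then blocks the colorings that produce it.
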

\begin{proof}{
If $\varphi$ is undefined on a subset $B$ of its domain, fix an element $*$ out of the range or if necessary add a new element $*$ to the range and map $B$ to this new element. Therefore,
without loss of generality we may assume the $\varphi$ is a total function (i.e. it is everywhere defined on its domain).
Then, for $r \isdef  p+q$, we may consider the function $\tilde{\varphi}: S^{^r} \to S^{^r}$, an $(m,r,s_{_{0}})$-invertible extension of $\varphi$ given by Lemma~\ref{lem:geninvfunc}, and its simulation $\Kv{\matr{G}}_{_{\tilde{\varphi},\tilde{n}}}[x;y;R_{_{\tilde{n}}}]$,
given by Proposition~\ref{pro:permutations} for some $\tilde{n} \geq \max\{m^r,3\}$.
If $r=1$ the proof is clear by Proposition~\ref{pro:permutations}. Else, assume that $\tilde{n}=n^r$ for some $n \geq m$ and define
the simulator $\matr{G}_{_{\tilde{\varphi},n}}[\vect{x};\vect{y};R]$ as
$$\displaystyle{\sum_{wz \in E(\Kv{\matr{G}}_{_{\tilde{\varphi},\tilde{n}}})}} \
\matr{E}_{_{r,n}}[(w_{_{0}},w_{_{1}},\ldots,w_{_{r-1}}),(z_{_{0}},z_{_{1}},\ldots,z_{_{r-1}});(z_{_{0}},z_{_{1}},\ldots,z_{_{r-1}});R],$$
that can be described as the graph obtained by blowing up any vertex $w$ of $\Kv{\matr{G}}_{_{\tilde{\varphi},\tilde{n}}}$ to $r$ vertices
$w_{_{0}},w_{_{1}},\ldots,w_{_{r-1}}$ and putting a copy of $$\matr{E}_{_{r,n}}[(w_{_{0}},w_{_{1}},\ldots,w_{_{r-1}}),(z_{_{0}},z_{_{1}},\ldots,z_{_{r-1}});(z_{_{0}},z_{_{1}},\ldots,z_{_{r-1}});R]$$
if there is an edge $wz$ in $\Kv{\matr{G}}_{_{\tilde{\varphi},\tilde{n}}}$.\\
Now, note that each one of the $\tilde{n}=n^r$ colorings of a vertex $w$ of $\Kv{\matr{G}}_{_{\tilde{\varphi},\tilde{n}}}$ corresponds to a unique $n$-coloring
of the $r$-tuple $\vect{w}=(w_{_{0}},w_{_{1}},\ldots,w_{_{r-1}})$ and by Proposition~\ref{pro:edgesimulation} the behavior of the  the graph
$\matr{E}_{_{r,n}}$ with respect to these $n$-colorings is exactly as an edge in $\Kv{\matr{G}}_{_{\tilde{\varphi},\tilde{n}}}$, and consequently,
$\matr{G}_{_{\tilde{\varphi},n}}[\vect{x};\vect{y}]$ simulates $\tilde{\varphi}$ through $n$-colorings.\\
Hence, finally, by Lemma~\ref{lem:geninvfunc} we may define
$$\matr{G}_{_{\varphi,n}}[\vect{u};\vect{v};R] \isdef \matr{G}_{_{\tilde{\varphi},n}}[(\vect{u},\vect{s}_{_{0}});\vect{v}|_{0,\ldots,q};R].$$
Now, if for some $\vect{u}$, $\varphi(\vect{u})$ is supposed to be {\it undefined}, we may use the technique we adopted in Proposition~\ref{pro:edgesimulation}
to single out the value $*$ by a suitable simulator subgraph and make the corresponding coloring illegal.\\
The number of vertices and edges follows directly from the construction, Proposition~\ref{pro:permutations}  and  Proposition~\ref{pro:edgesimulation}.
}\end{proof}
\subsection{On the effectiveness of the function simulation }
In this section we consider the computational effectiveness of our constructions. To begin, let us discuss some computational aspects of the
codings we are going to use for graphs and functions. As a matter of fact, the coding one uses to feed the objects as {\it inputs} to an {\it algorithm} as a constructive solution of a problem may have a tremendous effect on the computational complexity of the solution itself
(for instance, consider a $\{0,1\}$-coding for the instances of a decision problem where the {\it yes} instances start with a $1$ and {\it no}
instances start with a $0$). Hence, it is quite important to fix a legitimate coding of objects before we start our computational analysis of the algorithms.\\
In this regard, let us start with the coding of functions. In what follows $\lfloor \varphi \rfloor_{_{i}}$ stands for the coding that $\{0,1\}$-encodes
the function $\varphi: A \to B$ when $A$ and $B$ are finite sets, as a set of ordered pairs. Then, it is clear that the size of this encoding
$|\lfloor \varphi \rfloor_{_{i}}|$ is of order $O(|A|(\log |A|+\log |B|))$. On the other hand, if we consider the (quantum) encoding of the same
function based on Lemma~\ref{lem:geninvfunc} consisting of $\Theta$ transpositions, then we denote this coding of the function $\varphi$ by
$\lfloor \varphi \rfloor_{_{Q}}$ and it is clear that its length, $|\lfloor \varphi \rfloor_{_{Q}}|$, is of order $O(\Theta \log r)$ where $r$
is the size of the domain for the invertible extension of $\varphi$ obtained from Lemma~\ref{lem:geninvfunc}.\\
For a given graph $\matr{G}=(V,E)$, $\langle \matr{G}\rangle_{_{i}}$ stands for the $\{0,1\}$-encoding of the adjacency matrix of $\matr{G}$
whose size $|\langle \matr{G}\rangle_{_{i}}|$ is of order $O(|V|^2)$. Moreover, let $\Gamma$ be the set of alphabets (symbols) that one needs
to write down the amalgam constructions of this article (e.g. contains $(,),+,0,1,$ $\ldots,9,\sum,[,],...$). Then it is clear that each such amalgam
construction presents an encoding $\langle \matr{G}\rangle_{_{Q}}$ of the corresponding graph on the alphabet $\Gamma$, coming from a function
$\varphi:S^{^p} \to S^{^q}$ with the coding $\lfloor \varphi \rfloor_{_{Q}}$, whose size $|\langle \matr{G}\rangle_{_{Q}}|$ is of order
$O(\Theta (p+q)\log m)$.\\
Now, consider the space
$${\bf Fin}_{_{S}} \isdef \{\varphi:S^{^p} \to S^{^q} \ \ | \ \ p \geq 1, q \geq 1 \}$$
that essentially contains all functions with finite domain and range. Our analysis shows that there is an embedding of ${\bf Fin}_{_{S}}$
into the space of simple graphs along with suitably chosen encodings $\lfloor \varphi \rfloor_{_{Q}}$ and $\langle \matr{G}\rangle_{_{Q}}$  in a way that
\begin{enumerate}
\item{The size $|\langle \matr{G}\rangle_{_{Q}}|$ is of order $O(|\lfloor \varphi \rfloor_{_{Q}}|)$.}
\item{The size $|\langle \matr{G} \rangle_{_{i}}|$ is of order $O(|\lfloor \varphi \rfloor_{_{i}}|^4)$.}
\item{Construction of $\langle \matr{G}\rangle_{_{Q}}$ given $\lfloor \varphi \rfloor_{_{Q}}$, (or $\langle \matr{G}\rangle_{_{i}}$ given $\lfloor \varphi \rfloor_{_{i}}$) is effective and can be obtained in
polynomial time. }
\end{enumerate}
We also have to consider the evaluation of a given function in ${\bf Fin}_{_{S}}$.
For this, first,  we go through some preliminaries.
Any marked graph
 $$\varrho:\{1,2,\ldots,|V(\matr{G})|\} \longrightarrow \matr{G}$$
 will be called an {\it ordered} graph and
will be denoted in an abbreviated style as $\matr{G}[\varrho]$ or
$\matr{G}[\nu]$ when $|V(\matr{G})|=\nu$ and
$\varrho$ is clear from the context (usually assumed to be $i \mapsto v_{_{i}}$).
Let $t$ be a fixed integer and ${\cal L}=\{L_{_{1}},L_{_{2}},\ldots,L_{_{\nu}}\}$ be a set of lists for
which $L_{_{i}} \subseteq \{1,2,\ldots,t\}$ for each $i$ and
$$\|{\cal L}\| \isdef \displaystyle{\sum_{i=1}^{\nu}}\ |L_{_{i}}|.$$
A graph $\matr{G}$ with $|V(\matr{G})|=\nu$ marked by ${\cal L}$ is
called a {\it list-graph} and is denoted by $\matr{G}[{\cal L}]$
(when we assume that the mapping is $L_{_{v}} \mapsto  v$).\\
Given a list-graph $\matr{G}[{\cal L}]$, the corresponding list coloring problem is the problem of finding a proper coloring
$\sigma: V(\matr{G}) \to \{1,2,\ldots,t\}$ in such a way that for any vertex $v$ we have $\sigma(v) \in L_{_{v}}$.
The {\it direct forcing rule} applied on an edge $e=uv \in E(\matr{G}[{\cal L}])$ is a rule that changes the list-graph $\matr{G}[{\cal L}]$
to a list-graph $\matr{G}[{\cal L}']$ such that
$$
L'_{_{z}} \isdef \left \{
   \begin{array}{ll}
    L_{_{z}}-L_{_{u}} & \text{if $z=v$ and $|L_{_{u}}|=1$,}  \\
    \emptyset &  \text{if $z=v$ and $L_{_{u}}=\emptyset$,}\\
    L_{_{z}} & \text{otherwise.}
   \end{array}\right.
$$
Observe that the list coloring problems corresponding to
$\matr{G}[{\cal L}]$ and $\matr{G}[{\cal L}']$ have the same
set of solutions.
The  direct forcing rule applied on an edge $e \in E(\matr{G}[{\cal L}])$ is denoted by ${\sf R}_{_{e}}$ and the list-graph obtained by applying
this rule is denoted by $\matr{G}[{\cal L}']=\matr{G}[{\cal L}]{\sf R}_{_{e}}$.\\
The {\it reduction} relation $\Rightarrow$ on list-graphs is defined as follows.
\begin{defin}{
We say that a list-graph $\matr{G}[{\cal L}]$ {\it is reduced to} $\matr{G}[{\cal L}']$ and we write
$\matr{G}[{\cal L}] \Rightarrow \matr{G}[{\cal L}'],$
if ${\cal L}' \not = {\cal L}$ and there exists an edge $e \in E(\matr{G}[{\cal L}])$ such that
$\matr{G}[{\cal L}']=\matr{G}[{\cal L}]{\sf R}_{_{e}}$.\\
As usual, $\Rightarrow^*$ stands for the reflexive and transitive closure of $\Rightarrow$.
 }\end{defin}
 \begin{lem}\label{lem:normalform}
 The reduction $\Rightarrow$ is terminating and confluent.
 \end{lem}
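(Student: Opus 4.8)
The plan is to treat this as a standard rewriting-system argument, establishing termination first and then confluence via a local-confluence (Newman's Lemma) argument. For \emph{termination}, I would exhibit a strictly decreasing monovariant. The natural choice is $\|{\cal L}\| = \sum_{i=1}^{\nu} |L_{_i}|$, the total list size. Each application of the direct forcing rule ${\sf R}_{_e}$ on an edge $e=uv$ either leaves ${\cal L}$ unchanged — in which case, by the definition of $\Rightarrow$, it does not count as a reduction step — or it strictly shrinks exactly one list $L_{_v}$ (by removing the single color in $L_{_u}$ when $|L_{_u}|=1$, or by emptying $L_{_v}$ when $L_{_u}=\emptyset$) while leaving every other list untouched. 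Hence every step of $\Rightarrow$ strictly decreases the non-negative integer $\|{\cal L}\|$, so no infinite reduction chain exists; $\Rightarrow$ is terminating.

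For \emph{confluence}, since $\Rightarrow$ is terminating, by Newman's Lemma it suffices to prove \emph{local confluence}: whenever $\matr{G}[{\cal L}] \Rightarrow \matr{G}[{\cal L}_1]$ via ${\sf R}_{_{e_1}}$ and $\matr{G}[{\cal L}] \Rightarrow \matr{G}[{\cal L}_2]$ via ${\sf R}_{_{e_2}}$, there is a common list-graph $\matr{G}[{\cal L}']$ with $\matr{G}[{\cal L}_1] \Rightarrow^* \matr{G}[{\cal L}']$ and $\matr{G}[{\cal L}_2] \Rightarrow^* \matr{G}[{\cal L}']$. I would do a short case analysis on $e_1=u_1v_1$ and $e_2=u_2v_2$. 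The key structural observation is that ${\sf R}_{_e}$ only \emph{reads} $L_{_u}$ and only \emph{writes} $L_{_v}$, and crucially the only property of $L_{_u}$ it uses is whether $|L_{_u}|=1$ (and what that single element is) or $L_{_u}=\emptyset$ — and a reduction step never enlarges a list and never changes a size-$1$ list to a different size-$1$ list or to a non-size-$0/1$ list. The cases are: (a) $v_1\ne v_2$: the two rules touch disjoint target lists, they commute outright, and applying the other rule afterward yields the common ${\cal L}'={\cal L}_1\cap{\cal L}_2$ (intersecting componentwise); one only needs that the guard of ${\sf R}_{_{e_2}}$ is still enabled after ${\sf R}_{_{e_1}}$, which holds because $L_{_{u_2}}$ was possibly shrunk but a forcing step firing on $e_2$ with the \emph{new} smaller $L_{_{u_2}}$ still produces a list contained in the result of the old step — here one argues that iterating forcing to a normal form absorbs the difference. (b) $v_1=v_2=v$ but possibly $u_1\ne u_2$: both steps write $L_{_v}$; after performing ${\sf R}_{_{e_1}}$ one performs ${\sf R}_{_{e_2}}$ (its guard depends only on $L_{_{u_2}}$, which is unchanged), and vice versa, and since set difference against two fixed subsets is order-independent, both paths reach $L_{_v}$ minus both forced colors. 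In every case the two one-step divergences reconverge in at most one or two further steps.

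The main obstacle I anticipate is the bookkeeping in case (a) when the shared vertex phenomenon is subtle, i.e.\ when the \emph{target} of one rule is the \emph{source} of the other ($v_1=u_2$). Then ${\sf R}_{_{e_1}}$ may change $|L_{_{v_1}}|$ from something larger to $1$, or from $1$ to $0$, thereby \emph{enabling} or \emph{altering} the effect of ${\sf R}_{_{e_2}}$ which reads $L_{_{v_1}}=L_{_{u_2}}$. The clean way to handle this is not to insist the two single steps commute, but to use the termination already proved: reduce both $\matr{G}[{\cal L}_1]$ and $\matr{G}[{\cal L}_2]$ to \emph{their} normal forms and show these normal forms coincide, using the invariant (noted in the excerpt right after the definition of the direct forcing rule) that the set of solutions of the list coloring problem is preserved by every ${\sf R}_{_e}$, together with the fact that a normal form is exactly a list-graph on which no edge rule changes anything — which pins it down from ${\cal L}$ up to the solution set in a way that makes the common normal form canonical. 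So the refined plan is: prove termination via $\|\cdot\|$; observe solution-set invariance; show local confluence by the case analysis above, deferring the awkward overlap case to ``run both branches to normal form and invoke that normal forms are determined''; then conclude global confluence by Newman's Lemma.
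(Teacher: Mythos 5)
Your termination argument is exactly the paper's (the monovariant $\|{\cal L}\|$), and your overall skeleton for confluence (Newman's lemma plus local confluence) is also the route the paper takes; the disjoint-target and common-target cases of your analysis are fine. The genuine gap is in the one case you yourself flag as the obstacle, namely $v_{_{1}}=u_{_{2}}$, where the first rule writes the list that the second rule reads. Your proposed escape --- run both branches to their normal forms and argue these coincide because the solution set is preserved and ``a normal form is pinned down from ${\cal L}$ up to the solution set'' --- does not work. An irreducible list-graph is not determined by its solution set: a triangle in which every list is $\{1,2\}$ and the same triangle in which every list is $\emptyset$ are both irreducible and both have empty solution set, yet they are different list assignments. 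And if instead you mean that the normal form is determined by the starting list-graph ${\cal L}$, that is precisely uniqueness of normal forms, i.e.\ the confluence you are trying to prove, so the argument is circular.

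The overlap case must be closed by direct computation, and it does close: the paper does this (tersely) by asserting the identity ${\sf R}_{_{e_{_{1}}}}{\sf R}_{_{e_{_{2}}}}{\sf R}_{_{e_{_{1}}}}={\sf R}_{_{e_{_{2}}}}{\sf R}_{_{e_{_{1}}}}{\sf R}_{_{e_{_{2}}}}$ for arbitrary edges, which immediately gives local confluence because $\Rightarrow^{*}$ absorbs applications of rules that change nothing. Concretely, in your problem case with $e_{_{1}}=u_{_{1}}v_{_{1}}$ and $e_{_{2}}=v_{_{1}}v_{_{2}}$: since both one-step reductions are assumed genuine, the guard of ${\sf R}_{_{e_{_{2}}}}$ already holds at ${\cal L}$, so $|L_{_{v_{_{1}}}}|\leq 1$ there; after ${\sf R}_{_{e_{_{1}}}}$ the list $L_{_{v_{_{1}}}}$ can only have become empty, and one further application of ${\sf R}_{_{e_{_{2}}}}$ on each branch lands both branches at the same list assignment. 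Replacing your ``defer to normal forms'' step with this short verification (or with the paper's braid identity) repairs the proof.
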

 \begin{proof}{%
 	First, we show that $\Rightarrow$ is terminating; that is,
	there is no infinite reduction sequence
	$$\matr{G}[{\cal L}_{_{1}}]\Rightarrow\matr{G}[{\cal L}_{_{2}}]\Rightarrow
	\matr{G}[{\cal L}_{_{3}}]\Rightarrow \cdots\;.$$
	This is because each reduction removes at least one color from one of the lists
	and the total size of the lists $\|{\cal L}_{_{1}}\|$ is finite and non-negative.\\
	Next, let us verify that $\Rightarrow$ is confluent.
	Since $\Rightarrow$ is terminating, by Newman's lemma (see e.g.~\cite{BANI99})
	it is enough to verify that $\Rightarrow$ is locally confluent; that is,
	$\matr{G}[{\cal L}]\Rightarrow\matr{G}[{\cal L}_{_{1}}]$ and
	$\matr{G}[{\cal L}]\Rightarrow\matr{G}[{\cal L}_{_{2}}]$ implies that
	there exist another list-graph $\matr{G}[{\cal L}']$ such that
	$\matr{G}[{\cal L}_{_{1}}]\Rightarrow^*\matr{G}[{\cal L}']$ and
	$\matr{G}[{\cal L}_{_{2}}]\Rightarrow^*\matr{G}[{\cal L}']$.\\
	The latter is equivalent to saying that for every two edges
	$e_{_{1}}$ and $e_{_{2}}$ in $\matr{G}$, there are two sequences of edges
	$a_{_{1}},a_{_{2}},\ldots,a_{_{k}}$ and $b_{_{1}},b_{_{2}},\ldots,b_{_{l}}$ such that
	${\sf R}_{e_{_{1}}}{\sf R}_{a_{_{1}}}{\sf R}_{a_{_{2}}}\cdots{\sf R}_{a_{_{k}}}=
	{\sf R}_{e_{_{2}}}{\sf R}_{b_{_{1}}}{\sf R}_{b_{_{2}}}\cdots{\sf R}_{b_{_{l}}}$.\\
	Now, if $e_{_{1}}$ and $e_{_{2}}$ are two arbitrary edges in $\matr{G}$,
	it is easy to verify that
	${\sf R}_{e_{_{1}}}{\sf R}_{e_{_{2}}}{\sf R}_{e_{_{1}}} =
	{\sf R}_{e_{_{2}}}{\sf R}_{e_{_{1}}}{\sf R}_{e_{_{2}}}$.
	It follows that the reduction relation~$\Rightarrow$
	is locally confluent.
 }\end{proof}
 \begin{defin}{As a direct consequence of Lemma~\ref{lem:normalform} one may deduce that every list-graph $\matr{G}[{\cal L}]$ has a unique normal form,
 denoted by $\matr{G}[{\cal L}^*]$, such that $\matr{G}[{\cal L}] \ \Rightarrow^* \ \matr{G}[{\cal L}^*]$ and $\matr{G}[{\cal L}^*]$ is irreducible
 (see e.g.~\cite{BANI99}).
 }\end{defin}
 In the following proposition $\langle \matr{G}[{\cal L}] \rangle_{_{i}}$ stands for a coding of the list-graph $\matr{G}[{\cal L}]$
 that is based on the $\{0,1\}$-encoding of its adjacency matrix along with a binary encoding of the contents of each list assigned to
 the vertices.
\begin{pro}\label{pro:polyalgorithm}
There exists a polynomial-time algorithm that receives the coding of a list-graph  $\langle \matr{G}[{\cal L}] \rangle_{_{i}}$ as the input
and produces the irreducible normal form $\langle \matr{G}[{\cal L}^*] \rangle_{_{i}}$ as the output.
\end{pro}
\begin{proof}{
Consider the algorithm that repeatedly scans through
the list-graph $\matr{G}[{\cal L}]$ (in a canonical predefined order)
and applies the direct forcing rule on every edge,
until the rule is not applicable on any edge anymore.
The list-graph obtained at the end is clearly
the normal form of $\matr{G}[{\cal L}]$.\\
To estimate the running time of the algorithm,
note that each round of this algorithm takes
$O(|\langle \matr{G}[{\cal L}] \rangle_{_{i}}|)$ steps.
Furthermore, the length of every reduction sequence
$$\matr{G}[{\cal L}_{_{1}}]\Rightarrow\matr{G}[{\cal L}_{_{2}}]\Rightarrow
	\matr{G}[{\cal L}_{_{3}}]\Rightarrow \cdots\Rightarrow\matr{G}[{\cal L}_{_{k}}]$$
is bounded by $\|{\cal L}\|$, and hence,
the algorithm performs at most $\|{\cal L}\|$ rounds before it stops.
Therefore, the algorithm finds the normal form of $\matr{G}[{\cal L}]$
in $O(\|{\cal L}\|\times|\langle \matr{G}[{\cal L}] \rangle_{_{i}}|)$
time steps.
}\end{proof}
Now, we may prove our main computability result.
\begin{thm}\label{thrm:effectivealg}
There is a polynomial-time algorithm that,
given a graph \linebreak $\matr{G}_{_{\varphi,n}}[X;Y;R]$
constructed in Theorem~\ref{thrm:main} and
an assignment $\sigma_{_{0}}: X  \cup R \to C$
with $\sigma_{_0}(\Kv{i})=i$ for $\Kv{i}\in R$,
finds the unique extension of $\sigma_{_0}$ to a
proper $C$-coloring of $\matr{G}$ whenever such a coloring exists
and declares when no such extension exists.
\end{thm}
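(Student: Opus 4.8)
The plan is to recast the extension problem as a list--colouring instance and then run the confluent normal--form procedure of Proposition~\ref{pro:polyalgorithm}. Given the graph $\matr{G}\isdef\matr{G}_{_{\varphi,n}}$ and the assignment $\sigma_{_0}$, I would form the list--graph $\matr{G}[{\cal L}_{_0}]$ over the colour set $C=\{0,1,\ldots,n-1\}$ by putting $L_{_v}\isdef\{\sigma_{_0}(v)\}$ for every $v\in X\cup R$ and $L_{_v}\isdef C$ for every other vertex; a proper $C$--colouring of $\matr{G}$ extending $\sigma_{_0}$ is precisely a solution of the list--colouring problem for $\matr{G}[{\cal L}_{_0}]$. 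Applying the algorithm of Proposition~\ref{pro:polyalgorithm} yields, in polynomial time, the unique normal form $\matr{G}[{\cal L}^{*}]$, which, since each application of the direct forcing rule preserves the set of solutions, has exactly the same solutions as $\matr{G}[{\cal L}_{_0}]$. The output routine then inspects ${\cal L}^{*}$: if some list is empty it declares that $\sigma_{_0}$ has no proper extension; if every list is a singleton it returns the assignment $\sigma^{*}$ with $\sigma^{*}(v)\in L^{*}_{_v}$.

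Both of these answers are easy to justify. If $L^{*}_{_v}=\emptyset$ for some $v$, then $\matr{G}[{\cal L}^{*}]$, hence $\matr{G}[{\cal L}_{_0}]$, has no solution, so no proper extension of $\sigma_{_0}$ exists. If instead every list of ${\cal L}^{*}$ is a singleton, then the resulting assignment $\sigma^{*}$ is automatically proper: if $\sigma^{*}(u)=\sigma^{*}(v)$ held for some edge $uv$, the rule ${\sf R}_{_{uv}}$ would still be applicable and would empty $L^{*}_{_v}$, contradicting irreducibility of ${\cal L}^{*}$. Since $\sigma^{*}$ agrees with $\sigma_{_0}$ on $X\cup R$, it is a proper extension of $\sigma_{_0}$, and by Definition~\ref{defin:functionsim}(iii) together with Theorem~\ref{thrm:main} it is the \emph{only} one, so returning it is correct. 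The one remaining possibility is that ${\cal L}^{*}$ is \emph{mixed}: all of its lists are non-empty but at least one has size $\geq 2$. In that case the routine has nothing sound to report, so correctness of the whole procedure hinges entirely on showing that this case cannot occur for a graph $\matr{G}_{_{\varphi,n}}$ built as in Theorem~\ref{thrm:main}.

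To rule out the mixed case I would prove that for these graphs the direct forcing rule is \emph{complete}: reduction from ${\cal L}_{_0}$ always terminates either with an empty list or with all lists singletons. The argument would be a structural induction on the amalgam expression defining $\matr{G}_{_{\varphi,n}}$, using confluence (Lemma~\ref{lem:normalform}) to apply the forcing rule in whatever order is convenient --- concretely, in the data--flow order induced by the feed--forward composition of gadgets noted after Lemma~\ref{lem:combination}. At the level of a single gadget $\matr{H}[\text{inputs};\text{outputs};R]$ one establishes the following forcing--completeness property: whenever the vertices of $R$ carry the singleton lists $\Kv{i}\mapsto\{i\}$, the input vertices carry arbitrary singleton lists, and every other vertex carries the full list $C$, the normal form of $\matr{H}$ pins every internal and output vertex to a single colour, unless some list gets emptied along the way. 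This would be checked for the base pieces $\eg[u,v]$ and $\matr{K}_{_{n}}[R]$ by hand, and then pushed through each construction of Sections~\ref{sec:invertiblefuncs} and~\ref{sec:gadgets}: in every case one re--reads the uniqueness argument already recorded in the corresponding result (Lemma~\ref{lem:Lfunction}, Proposition~\ref{pro:permutations}, the gadget lemmas of Section~\ref{sec:gadgets}, Proposition~\ref{pro:edgesimulation}) and observes that each step there has the shape ``vertex $v$ is forced to take colour $c$ because all other colours already occur on pinned neighbours of $v$'', which is a single application of a rule ${\sf R}_{_{e}}$. For the cylindrical construction of Theorem~\ref{thrm:main} one verifies in addition that blowing up a vertex $w$ of $\Kv{\matr{G}}_{_{\tilde{\varphi},\tilde{n}}}$ into the tuple $\vect{w}$ and inserting copies of $\matr{E}_{_{r,n}}$ preserves forcing--completeness, and that the subgraph singling out the dummy value $*$ empties a list exactly on the colourings corresponding to inputs on which $\varphi$ is undefined; also, if $\sigma_{_0}$ happens to colour some vertex with a colour outside its admissible range, an edge to a suitable $\Kv{k}$ empties that vertex's list at once.

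The main obstacle is precisely this induction. Direct forcing is a deliberately weak propagation rule --- it performs no case splitting and extracts nothing from a list of size $\geq 2$ --- so ``completeness'' is a genuine constraint rather than a formality; the two delicate spots are the propagation across the tree $\matr{T}_{_{\zeta}}$ inside $\Lkn$, where a vertex $u_{_j}$ becomes pinned only by \emph{combining} the constraint it receives from $\eta_{_{k}}$ with the one it receives from the relevant $\zeta_{_{k}}(l,j)$, so one must check that the two partial reductions interleave as intended, and the edge--simulation gadget of Proposition~\ref{pro:edgesimulation}, where one must confirm that a list is emptied exactly when $\vect{u}=\vect{v}$. Granting forcing--completeness of $\matr{G}_{_{\varphi,n}}$, the normal form is never mixed, so the algorithm always halts in one of the two decidable cases; by Proposition~\ref{pro:polyalgorithm} it runs in time $O(\|{\cal L}_{_0}\|\cdot|\langle \matr{G}[{\cal L}_{_0}] \rangle_{_{i}}|)$, which is polynomial in the size of the given graph, plus one linear pass to read off $\sigma^{*}$.
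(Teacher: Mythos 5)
Your proposal is correct and follows essentially the same route as the paper: encode $\sigma_{_0}$ as a list--graph, compute the irreducible normal form via Proposition~\ref{pro:polyalgorithm}, and then establish that the normal form can never be ``mixed'' by verifying forcing--completeness gadget by gadget (starting from $\Lkn$ and the other atoms) and exploiting the feed--forward structure of the amalgam. The paper's own proof states exactly this claim for $\Lkn$ and then appeals to the observation that all gadgets are built from $\eg[u,v]$, $\matr{G}_{_{\Cs^{+}_{_{m}},n}}$, $\matr{G}_{_{\Cs^{-}_{_{m}},n}}$ and $\Lkn$ with no feedback outside $R$; your write-up is somewhat more explicit about why the mixed case is the crux and where the delicate verifications lie, but the underlying argument is the same.
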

\begin{proof}{Given $\sigma_{_0}$ define the list assignment
${\cal L}\isdef \{L_z: z\in V(\matr{G})\}$ with
$$
L_{_{z}} \isdef \left \{
   \begin{array}{ll}
     \{\sigma_{_0}(z)\}\;, &  \text{if $\sigma_{_0}(z)$ is defined,}\\
     C\;, & \text{otherwise.}
   \end{array}\right.
$$
Considering Lemma~\ref{lem:normalform} and Proposition~\ref{pro:polyalgorithm}, it is sufficient to verify that in the normal form $\matr{G}_{_{\varphi,n}}[{\cal L}^*]$ of $\matr{G}_{_{\varphi,n}}[{\cal L}]$, either all the lists in ${\cal L}$ are of size one and present the unique extension of $\sigma_{_0}$ to a proper $n$-coloring,
or all the lists in ${\cal L}$ are empty.\\
By  Lemma~\ref{lem:Lfunction} it is straight forward to verify that
if $\Lkn[{\cal L}^*]$ is the irreducible normal form of the list coloring problem $\Lkn[{\cal L}]$ with
$$
L_{_{z}} \isdef \left \{
   \begin{array}{ll}
     \{i\}\;, &  \text{if $z=\Kv{i}\in R$,}\\
     \{s\}\;, &  \text{if $z=x$,} \\
     C\;, & \text{otherwise,}
   \end{array}\right.
$$
for some $s \in S$, then
\begin{enumerate}
\item{If $s=k$ then $L_{_{y}}=\{k\}$.}
\item{If $s\not = k$ then $L_{_{y}} \subseteq C_{_{\{k\}}}$.}
\end{enumerate}
Using this one may verify the following claim.
\begin{itemize}
\item{{\bf Claim:}
{\it If $\Lkn[{\cal L}^*]$ is the irreducible normal form of the list coloring problem $\Lkn[{\cal L}]$ with
$|L_{_{x}}|=|L_{_{y}}|=|L_{_{\Kv{i}}}|=1 \ \ (i \in C)$, then all lists of ${\cal L}^*$ are of length one and they present a proper
$C$-coloring of $\Lkn$.
}}
\end{itemize}
Using this claim it is easy to verify the theorem for the graph that simulates a permutation $\pi$ and is obtained from Proposition~\ref{pro:permutations}. Now, by a careful inspection of the constructions we may also verify that
\begin{itemize}
\item{All our gadgets are constructed using the subgraphs $\eg[u,v]$, $\matr{G}_{_{\Cs^{+}_{_{m}},n}}$, $\matr{G}_{_{\Cs^{-}_{_{m}},n}}$ and $\Lkn$ as atoms.}
\item{There is no feedback in our constructions except through $R$ which is initially colored.}
\end{itemize}
Hence, the theorem is also valid for the generalized edge $\matr{E}_{_{r,n}}[\vect{u},\vect{v};\vect{v};R]$, and consequently is valid in general
for the whole simulation process.
}\end{proof}

\section{Appendix: graph amalgams}\label{APPNDX}
Following  \cite{DHT?}, and what we discussed in Section~\ref{INTRO}, note that if
$(X,\matr{G},\varrho)$ is a marked graph and
$\varsigma: X \longrightarrow
Y$ is a (not necessarily one-to-one) map, then one can obtain a new
marked graph $(Y,\matr{H},\tau)$ by considering the push-out of the
diagram
$$\matr{Y} \stackrel {\varsigma}{\longleftarrow} \matr{X}
\stackrel {\varrho}{\longrightarrow} \matr{G}$$ in the category of
graphs. It is easy to check that the push-out exists and is a
monomorphism. Also, it is easy to see that the new marked graph
$(Y,\matr{H},\tau)$ can be obtained from $(X,\matr{G},\varrho)$ by
identifying the vertices in each inverse-image of $\varsigma$.
Hence, again we may denote
$(Y,\matr{H},\tau)$ as
$\matr{G}[\varsigma(x_{_{1}}),\varsigma(x_{_{2}}),\ldots,\varsigma(x_{_{k}})]$
where we allow repetition in the list appearing in the brackets.
Note that with this notation one may interpret $x_{_{i}}$'s as a set
of {\it variables} in the  {\it graph structure}
$\matr{G}[x_{_{1}},x_{_{2}},\ldots,x_{_{k}}]$, such that when one
assigns other (new and not necessarily distinct) {\it values} to
these variables one can obtain
some other graphs (by identification of vertices).\\
On the other hand, given two marked graphs $(X,\matr{G},\varrho)$
and $(Y,\matr{H},\tau)$ with
$X=\{x_{_{1}},x_{_{2}},\ldots,x_{_{k}}\}$ and
$Y=\{y_{_{1}},y_{_{2}},\ldots,y_{_{l}}\}$, one can construct their
amalgam $(X,\matr{G},\varrho)+(Y,\matr{H},\tau)$ by forming the
push-out of the following diagram
$$\matr{H}  \stackrel {\tilde{\tau}}{\longleftarrow}  \matr{X} \cap \matr{Y}
\stackrel {\tilde{\varrho}}{\longrightarrow} \matr{G},$$ in which
$\tilde{\tau} \isdef \tau|_{_{X \cap Y}}$ and $\tilde{\varrho}
\isdef  \varrho|_{_{X \cap Y}}$. Following our previous notations we
may denote the new structure by
$$\matr{G}[x_{_{1}},x_{_{2}},\ldots,x_{_{k}}]+\matr{H}[y_{_{1}},y_{_{2}},\ldots,y_{_{l}}]$$
if there is no confusion about the definition of mappings. Note that
when $\matr{X} \cap \matr{Y}$ is the empty set, then the amalgam is
the {\it disjoint union} of the two marked graphs. Also, by the
universal property of the push-out diagram, the amalgam can be
considered as marked graphs marked by $X$, $Y$, $X \cup Y$ or $X
\cap Y$.



\begin{thebibliography}{10}

\bibitem{ARO09}
Sanjeev Arora and Boaz Barak, \emph{Computational Complexity}, Cambridge
  University Press, Cambridge, 2009, A modern approach.

\bibitem{BANI99}
Franz Baader and Tobias Nipkow, \emph{Term Rewriting and All That},
  Cambridge University Press, Cambridge, 1999.


\bibitem{BLA97}
Simon~R. Blackburn, \emph{Combinatorics and threshold cryptography},
  Combinatorial designs and their applications ({M}ilton {K}eynes, 1997),
  Chapman \& Hall/CRC Res. Notes Math., vol. 403, Chapman \& Hall/CRC, Boca
  Raton, FL, 1999, pp.~49--70.


\bibitem{BRI94}
Douglas~S. Bridges, \emph{Computability: A Mathematical Sketchbook},
Graduate Texts in Mathematics, vol.
  146, Springer-Verlag, New York, 1994.

\bibitem{CGS98}
Ghulam-Rasool Chaudhry, Hossein Ghodosi, and Jennifer Seberry, \emph{Perfect
  secret sharing schemes from {R}oom squares}, J. Combin. Math. Combin. Comput.
  \textbf{28} (1998), 55--61, Papers in honour of Anne Penfold Street.

\bibitem{CDS94}
Joan Cooper, Diane Donovan, and Jennifer Seberry, \emph{Secret sharing schemes
  arising from {L}atin squares}, Bull. Inst. Combin. Appl. \textbf{12} (1994),
  33--43.


\bibitem{DAN01}
Amir Daneshgar, \emph{Forcing structures and cliques in uniquely vertex
  colorable graphs}, SIAM J. Discrete Math. \textbf{14} (2001), no.~4, 433--445.


\bibitem{DE08}
Amir Daneshgar and Ruzbeh Ebrahimi~Soorchaei, \emph{On sequential coloring of
  graphs and its defining sets}, arXiv:0812.4920 (29 Dec. 2008).

\bibitem{DHT?}
Amir Daneshgar, Hossein Hajiabolhassan, and Siamak Taati, \emph{On the
  complexity of unique list colourability and the fixing number of graphs},
  Ars Combinatoria, Ars Combinatoria, {\bf 97} (2010), 301-319.

\bibitem{EEKR99-1}
H.~Ehrig, G.~Engels, H.-J. Kreowski, and G.~Rozenberg (eds.), \emph{Handbook of
  Graph Grammars and Computing by Graph Transformation. {V}ol. 2.
  {A}pplications, Languages and Tools}, World Scientific Publishing Co. Inc.,
  River Edge, NJ, 1999.

\bibitem{EEKR99-2}
H.~Ehrig, H.-J. Kreowski, U.~Montanari, and G.~Rozenberg (eds.), \emph{Handbook
  of Graph Grammars and Computing by Graph Transformation. {V}ol. 3.
  {C}oncurrency, Parallelism, and Distribution}, World Scientific Publishing
  Co. Inc., River Edge, NJ, 1999.

\bibitem{EUG92}
Franco Eugeni, \emph{Combinatorics and cryptography}, Combinatorics '90
  ({G}aeta, 1990), Ann. Discrete Math., vol.~52, North-Holland, Amsterdam,
  1992, pp.~159--174.

\bibitem{FNX04}
Keqin Feng, Harald Niederreiter, and Chaoping Xing (eds.), \emph{Coding,
  cryptography and combinatorics}, Progress in Computer Science and Applied
  Logic, vol.~23, Birkh\"auser Verlag, Basel, 2004.

\bibitem{FILA06}
L.~F. Fitina and S.~P. Lal, \emph{Access schemes based on perfect critical set
  partitions and transformations}, Australas. J. Combin. \textbf{34} (2006),
  229--237.


\bibitem{GON07}
Teofilo~F. Gonzalez (ed.), \emph{Handbook of Approximation Algorithms and
  Metaheuristics}, Chapman \& Hall/CRC Computer and Information Science Series,
  Chapman \& Hall/CRC, Boca Raton, FL, 2007.

\bibitem{GHS02}
Venkatesan Guruswami, Johan H{\aa}stad, and Madhu Sudan, \emph{Hardness of
  approximate hypergraph coloring}, SIAM J. Comput. \textbf{31} (2002), no.~6,
  1663--1686.

\bibitem{HENE04}
Pavol Hell and Jaroslav Ne{\v{s}}et{\v{r}}il, \emph{Graphs and Homomorphisms},
  Oxford Lecture Series in Mathematics and its Applications, vol.~28, Oxford
  University Press, Oxford, 2004.

\bibitem{JETO95}
Tommy~R. Jensen and Bjarne Toft, \emph{Graph Coloring Problems},
  Wiley-Interscience Series in Discrete Mathematics and Optimization, John
  Wiley \& Sons Inc., New York, 1995, A Wiley-Interscience Publication.

\bibitem{KUKO05}
Kamil Kulesza and Zbigniew Kotulski, \emph{Addressing new challenges by
  building security protocols around graphs}, Security protocols, Lecture Notes
  in Comput. Sci., vol. 3364, Springer, Berlin, 2005, pp.~301--306.



\bibitem{NICH00}
Michael~A. Nielsen and Isaac~L. Chuang, \emph{Quantum Computation and Quantum
  Information}, Cambridge University Press, Cambridge, 2000.

\bibitem{RASU07}
Jaikumar Radhakrishnan and Madhu Sudan, \emph{On {D}inur's proof of the {PCP}
  theorem}, Bull. Amer. Math. Soc. (N.S.) \textbf{44} (2007), no.~1, 19--61.


\bibitem{ROGR97}
Grzegorz Rozenberg (ed.), \emph{Handbook of Graph Grammars and Computing by
  Graph Transformation. {V}ol. 1}, World Scientific Publishing Co. Inc., River
  Edge, NJ, 1997, Foundations.


\bibitem{VLWIL01}
J.~H. van Lint and R.~M. Wilson, \emph{A Course in Combinatorics}, second ed.,
  Cambridge University Press, Cambridge, 2001.

\bibitem{WES96}
Douglas~B. West, \emph{Introduction to Graph Theory}, Prentice Hall Inc., Upper
  Saddle River, NJ, 1996.

\end{thebibliography}

\end{document}